\newtheorem{theorem}{Theorem}[section]     
\newtheorem{corollary}[theorem]{Corollary}
\newtheorem{proposition}[theorem]{Proposition}
\newtheorem{example}[theorem]{Example}
\newtheorem{problem}[theorem]{Problem}
\title{ON A NEW GRAPH DEFINED ON THE ORDER OF ELEMENTS OF A FINITE GROUP
	\footnote{Submitted to NSJOM on 15.05.2020 \&  accepted on 23.03.2021.}
}
\author{Subarsha Banerjee
	\footnote{The  author thanks the National Board of Higher Mathematics, Government of India for providing financial assistance.
	} 
	\\
	Department of Pure Mathematics, University of Calcutta\\
	35 Ballygunge Circular Road, Kol-700019\\
	West Bengal, India\\
	e-mail: subarshabnrj@gmail.com\\
	
}
\date{}
\begin{document}
	\maketitle
	
\begin{abstract}

In this paper, a new graph structure called the \textit{coprime order graph}  of a finite group $G$ denoted by $\Theta(G)$ has been introduced.
The \textit{coprime graph} of a finite group introduced by  Ma, Wei, and Yang [\textit{The coprime graph of a group. International Journal of Group Theory, 3(3), pp.13-23.}] is a subgraph of the \textit{coprime order graph} introduced in this paper. 
The vertex set of $\Theta(G)$ is $G$,  and any two vertices $x,y$ in $\Theta(G)$ are adjacent if and only if $\gcd(o(x),o(y))$ is equal to $1$ or a prime number.
We study how the  graph properties of $\Theta(G)$ and group properties of $G$ are related among themselves.
We provide a necessary and sufficient condition for  $\Theta(G)$ to be Eulerian for any finite group $G$. 
We also study  $\Theta(G)$ for certain finite groups like $\mathbb Z_n$ and $\mbox D_n$ and derive conditions when it is connected, complete, planar, and Hamiltonian for various $n\in \mathbb N$.
We also study the vertex connectivity of $\Theta(\mathbb Z_n)$ for various $n\in \mathbb N.$  
Finally, we have computed the signless Laplacian spectrum of $\Theta(G)$ when $G=\mathbb Z_n$ and $G=\mbox D_n$ for $n\in \{pq,p^m\}$ where $p,q$ are distinct primes and $m\in \mathbb{N}$.
\\
\\
\noindent
\textbf{Keywords:}  finite cyclic group, dihedral group, graph,  connectivity, signless laplacian
\\
\textbf{2010 Mathematics Subject Classification:} 05C25, 05C50.
\end{abstract}

\maketitle

\section{Introduction}

Generating graphs from various algebraic structures like groups and semigroups is nothing new.
Bosak in \cite{bosak1964graphs} studied various kinds of graphs that were defined on semigroups.
In \cite{zelinka1975intersection}, the author studied the \textit{intersection
graph} defined on a finite abelian group. A \textit{Cayley digraph} is also an important class of directed graphs defined on finite groups, and readers may refer to \cite{budden1985cayley,gallian2012contemporary} in order to find some information
about them. Kelarev and Quinn in \cite{kelarev2002directed} introduced the \textit{power graph} on  a semigroup $S$ as a directed graph in which the set of vertices is  $S$, and two distinct elements $a,b \in S$ are adjacent if and only if  $b=a^m$ for some positive integer $m$.
Motivated by the work in \cite{kelarev2002directed}, Chakrabarty \textit{et al.} studied the \textit{undirected power graph} on semigroups in \cite{chakrabarty2009undirected}. 
The undirected power graph on a semigroup $S$ is the graph  whose
vertex set is $S$, and two distinct vertices $a,b\in S$ are adjacent if and only if $a=b^m$ or $b=a^n$ for some positive integers $m, n$. Several  properties of power graph were investigated by Cameron and Ghosh in \cite{cameron2010power} and \cite{cameron2011power}.
In \cite{hamzeh2018order}, the authors  introduced a new graph known as the \textit{order supergraph} of the power graph of a finite group $G$, whose vertex set is $G$ and any two vertices $x, y$ are adjacent if and only if $o(x)\mid o(y)$ or $o(y)\mid o(x)$. The automorphism group of this graph was studied in \cite{hamzeh2017automorphism}.

Recently, several researchers have studied  spectral properties of  graphs associated with algebraic structures. 
The spectral properties of power graph of a finite group (\cite{mehranian2017spectra},\cite{hamzeh2017spectrum},\cite{banerjee2020signless},\cite{banerjee2021spectra}), Cayley graph of certain groups(\cite{babai1979spectra}, \cite{abdollahi2009cayley}, \cite{cheng2019integral}), commuting and non-commuting graph of dihedral groups(\cite{abdussakir2017spectra,banerjee2021metric}) etc. have been studied over the last few years.

The notion of \textit{coprime graph} of a finite group $G$ has existed in the literature for a long time. It was first introduced by Sattanathan and Kala as the \textit{order prime graph} in  \cite{sattanathan2009introduction}.
Later on, in \cite{ma2014coprime} Ma \textit{et al.} reintroduced and renamed the order prime graph as the \textit{coprime graph} and studied various properties of it.
The coprime graph was studied extensively in \cite{dorbidi2016note} and \cite{selvakumar2017classification}.
In \cite{banerjee2021laplacian}, the Laplacian spectra of coprime graph of finite cyclic and dihedral groups were studied.
In this paper, we introduce a new graph known as \textit{coprime order graph} of a finite group $G$. We denote it by $\Theta(G)$.
Clearly for a given finite group $G$, the \textit{coprime graph} is a subgraph of the \textit{coprime order graph} introduced in this paper.
We characterize some properties of  $\Theta(G)$ using the algebraic properties of the group $G$. We study the connectedness and the diameter of the graph $\Theta(G)$. We show that $\Theta(G)$ is Eulerian if and only if $G$ has odd order and every non-identity element of $G$ has prime order. We also find out when $\Theta(\mathbb{Z}_n)$ is planar
and Hamiltonian for various $n\in \mathbb{N}$.
We also study the vertex connectivity of $\Theta(\mathbb{Z}_n)$ for various $n$.
Finally, we find the signless Laplacian spectra of $\Theta(\mathbb{Z}_n)$  and $\Theta(\mbox D_n)$ for $n\in \{pq,p^m\}$ where $p,q$ are distinct primes and $m\in \mathbb{N}$.

The paper has been organized as follows:
In Section \ref{S2},  we have  provided the preliminary definitions and theorems that have been used throughout the paper.
In Section \ref{S3}, we formally introduce the \textit{coprime order graph} of a finite group $G$,  denoted by $\Theta(G)$,  and study  various properties of $\Theta(G)$.
In Section \ref{S4}, we study the vertex connectivity of $\Theta(\mathbb Z_n)$.
In Section \ref{S5}, we determine the signless Laplacian spectra of $\Theta(\mathbb{Z}_n)$  and $\Theta(\mbox D_n)$ for $n\in \{pq,p^m\}$.
\section{Preliminaries}
\label{S2}
In this section, for the convenience of the readers, we provide some preliminary definitions and theorems that have been used throughout the paper.
We denote a graph $\mathcal{G}$ by $\mathcal{G}=(V,E)$ where $V$ is the set of all vertices of $\mathcal{G}$ and $E$ denotes the set of all edges of $\mathcal{G}$.
A graph $\mathcal{G}$ is said to be \textit{simple} if it has no loops or parallel edges.
A graph with one vertex and no edges is called a \textit{trivial} graph. 
We denote the degree of a vertex $v\in V(\mathcal{G})$ by $\deg (v)$.
For a given graph $\mathcal{G}$, $\delta(\mathcal{G})=\min\{\deg(v):v\in \mathcal{G}\}$.
A \textit{subgraph} $\mathcal{H}=(W,F)$ of $\mathcal{G}=(V,E)$ is  a graph such that $W\subseteq V$ and $F\subseteq E$.
If there exists an edge between two vertices $a$ and $b$, then $a$ and $b$ are said to be adjacent, and it is denoted by $a\sim b$.
If there exists an edge between any two vertices of $\mathcal{G}$, then $\mathcal{G}$ is said to be \textit{complete} and is denoted by $K_n$.
A \textit{path} $P$ of length $k$ in a graph $G$ is an alternating sequence of vertices and edges $v_0, e_0, v_1, e_1, v_2, e_2, \ldots, v_{k-1}, e_{k-1}, v_k$, where $v_i's$  are distinct vertices, and $e_i$ is the edge joining $v_i$ and $v_{i+1}$. 
If $v_0=v_k$, then  $P$ is said to be a \textit{cycle} of length $k$.
The length of the shortest cycle in $\mathcal{G}$ is known as its \textit{girth}.
A graph  $\mathcal{G}$ is said to be  \textit{connected} if for any pair of vertices $u,v\in V$ there exists a path joining $u$ and $v$.
For a connected graph $\mathcal{G}$, the \textit{distance} between two vertices $u,v$ denoted by $d(u,v)$,  is defined as the length of the shortest path joining $u$ and $v$.
The \textit{diameter} of a connected graph $\mathcal{G}$, denoted by diam$(\mathcal{G})$, is  defined as diam$(\mathcal{G})=\max\{ d(u,v):u,v\in V\}$.
A \textit{planar} graph is a graph that can be embedded in the plane, i.e., it can be drawn on the plane in such a way that its edges intersect only at their endpoints.
An \textit{isomorphism} of graphs $\mathcal{G}$ and $\mathcal{H}$ denoted by $\mathcal{G}\cong \mathcal{H}$ is a bijection $f$ between  $V(\mathcal{G})$ and $V(\mathcal{H})$ such that any two vertices $u,v \in V(\mathcal{G})$ are adjacent if and only if the vertices $f(u),f(v)\in V(\mathcal{H})$ are adjacent.
An \textit{Eulerian cycle} in a graph $\mathcal{G}$ is a cycle which visits every edge exactly once.
A graph $\mathcal{G}$ is said to be \textit{Eulerian} if it has an \textit{Eulerian cycle}.
A \textit{Hamiltonian cycle} in a graph $\mathcal{G}$ is a cycle which visits every vertex exactly once.
A graph $\mathcal{G}$ is said to be \textit{Hamiltonian} if it has a \textit{Hamiltonian cycle}.
The \textit{vertex connectivity} $\kappa(\mathcal{G})$ of a graph $\mathcal{G}$ is the minimum number of vertices whose removal results in a disconnected or trivial graph. We define the  connectivity of a disconnected graph to be $0$.
Given a positive integer $k$, a graph $\mathcal{G}$ is said to be $k-$tough if for any integer $t>1$, $\mathcal{G}$ cannot be split into $t$ different connected components by the removal of fewer than $kt$ vertices.
The \textit{toughness} of a graph $\mathcal{G}$  is defined as the largest real number $t$ such that deletion of any $s$ vertices from $\mathcal{G}$ results in a graph which is either connected or else has at most $\frac{s}{t}$  components.
A \textit{dominating set} of a graph $\mathcal{G}$ is a subset $D$ of V such that for every $v\notin D$, there exists a vertex $w\in D$ for which $v$ is adjacent to $w$.
The \textit{domination number } is the number of vertices in a smallest dominating set of $\mathcal{G}$.
For more information on the terms used above, the readers may refer to any standard  book on graph theory, say \cite{diestel2005graph} or \cite{bondy1976graph}.

Let $\mathcal{G}$ be a finite simple undirected graph having vertex set $V(\mathcal{G})=\{v_1,v_2,\ldots,v_n\}$.
The \textit{adjacency matrix} of $\mathcal{G}$,  denoted by  $A(\mathcal{G})=(a_{ij})$ is defined as $a_{ij}=1$ if $v_i\sim v_j$ and $a_{ij}=0$ otherwise.
The \textit{degree matrix} of $\mathcal{G}$,  denoted by  $D(\mathcal{G})=(d_{ii})$ is a diagonal matrix,  where $d_{ii}$ denotes the degree of the $i^{th}$ vertex of $\mathcal{G}$.
The \textit{Laplacian matrix} $L(\mathcal{G})$ is defined as  $L(\mathcal{G}) = D(\mathcal{G})- A(\mathcal{G})$. 
The \textit{signless Laplacian matrix} $Q(\mathcal{G})$ is defined as  $Q(\mathcal{G}) = D(\mathcal{G})+ A(\mathcal{G})$.
The matrix $Q(\mathcal{G})$ is a real and symmetric matrix and hence all its eigenvalues are real.
Also, $Q(\mathcal{G})$ is a positive semi-definite matrix and hence all its eigenvalues are non-negative. 
For more information on $Q(\mathcal{G})$, readers may refer to \cite{cvetkovic2009towards}, \cite{cvetkovic2010towards} and \cite{cvetkovic2010towards1}.
We arrange the  eigenvalues of $Q(\mathcal{G})$ as $ \lambda_1(\mathcal{G})\geq \lambda_2(\mathcal{G}) \ge \cdots \ge \lambda_n(\mathcal{G})$  in non-increasing order, and repeated according to their multiplicities.

For $n\in \mathbb N$, the number of positive integers that are less than or equal to $n$ and are relatively prime to $n$ is denoted by $\varphi(n)$.
The function $\varphi$ is known as \textit{euler’s phi function}.
We know that  a finite cyclic group of order $n$ is isomorphic to $(\mathbb Z_n,+)$, where $\mathbb Z_n = \{0,1,2\ldots,n-2,n-1\}$, and hence we prove our results for $\mathbb{Z}_n$ instead of an arbitrary cyclic group.
An element $a\in \mathbb Z_n$ is said to be a \textit{generator} of $\mathbb Z_n$ if $\gcd(a,n)=1$.
An element which is not a generator is known as a \textit{non-generator}.
We denote the \textit{dihedral group} of order $2n$ by $\mbox{D}_n$.
The \textit{order} of an element $g\in G$, denoted by $o(g)$, is the least positive integer $n$ such that $g^n=e$, where $e$ is the identity element of $G$.
The number of elements in a set $S$ is denoted by $|S|$.
For basic definitions and notations on group theory, the readers are referred to  \cite{dummit2004abstract}.

The theorems used in the paper have been listed below.
The proof of theorems \ref{Th1}, \ref{Th11}, \ref{Kura}, \ref{Ore} can be found in   \cite{bondy1976graph} or \cite{diestel2005graph}, while the proof of Theorem \ref{Useit} can be found in \cite{bapat2010graphs}.
 
\begin{theorem}\label{Th1}
A connected graph $\mathcal{G}$ has an Eulerian cycle if and only if  $\deg(v)$ is even for all $v\in \mathcal{G}$.
\end{theorem}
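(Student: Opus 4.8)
The plan is to prove both implications of this classical characterization directly, with the real work lying in the "if" direction.

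For the "only if" direction, suppose $\mathcal{G}$ has an euler cycle $C$, and fix a vertex $v$. Reading $C$ cyclically, each time the sequence passes through $v$ it uses exactly two edges incident to $v$ --- one along which $C$ arrives and one along which it departs --- and these two are distinct because $C$ repeats no edge (and $\mathcal{G}$, being simple, has no loops). Since $C$ traverses every edge of $\mathcal{G}$ exactly once, the edges incident to $v$ are partitioned into such pairs, so $\deg(v)$ is even.

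For the "if" direction, assume $\mathcal{G}$ is connected and every vertex has even degree; we may assume $\mathcal{G}$ has at least one edge, else it is trivial. I would argue via a longest trail. Let $T$ (a walk with no repeated edge) have maximum length among all trails in $\mathcal{G}$. First, $T$ must be closed: if it ran from $a$ to $b$ with $a\neq b$, it would use an odd number of the edges incident to $b$ (two for each intermediate passage through $b$, plus the terminal edge), so the even degree of $b$ would leave an unused edge at $b$, which could be appended to $T$, contradicting maximality. Hence $T$ is a closed trail. Second, $T$ must use every edge: if not, connectedness of $\mathcal{G}$ yields an edge $e=uw\notin E(T)$ with $u\in V(T)$ (either $V(T)\neq V(\mathcal{G})$, giving an edge across the resulting cut, or $V(T)=V(\mathcal{G})$, in which case any omitted edge already has both ends in $V(T)$); since $T$ is closed it can be cyclically rewritten to start and end at $u$, and prepending $e$ produces a strictly longer trail, again contradicting maximality. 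Therefore $T$ is a closed trail using every edge exactly once, i.e.\ an euler cycle.

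The one delicate point is this last step --- showing a longest closed trail already exhausts $E(\mathcal{G})$ --- and it is precisely here that connectedness is indispensable. An alternative, arguably cleaner, route is induction on $|E(\mathcal{G})|$: a connected graph with all degrees even and at least one edge has minimum degree $\geq 2$, hence contains a cycle $C$; deleting $E(C)$ leaves a (possibly disconnected) graph all of whose vertices still have even degree, so each nontrivial component has an euler cycle by the inductive hypothesis, and these are spliced into $C$ at shared vertices, connectedness ensuring every edge-bearing component meets the union assembled so far. I would present the maximal-trail argument as the main proof and record the inductive version as a remark.
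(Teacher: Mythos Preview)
Your proof is correct; both directions are handled cleanly, and the maximal-trail argument for the ``if'' direction is the standard textbook approach, with the connectedness hypothesis invoked exactly where it is needed.

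However, there is nothing to compare against: the paper does not prove this statement. Theorem~\ref{Th1} appears in the Preliminaries section as a classical result, and the paper explicitly defers its proof to the references (Bondy--Murty or Diestel). So your write-up supplies a full argument where the paper simply cites one; the version you give is essentially what one finds in those sources.
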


\begin{theorem}\label{Th11} For any graph $\mathcal{G}$, 
$\kappa(\mathcal{G})\le \delta(\mathcal{G})$.
\end{theorem}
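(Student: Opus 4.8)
The plan is to exhibit, for an arbitrary graph $\mathcal{G}$, an explicit set of at most $\delta(\mathcal{G})$ vertices whose deletion leaves a disconnected or trivial graph; since $\kappa(\mathcal{G})$ is by definition the \emph{minimum} size of such a set, the inequality $\kappa(\mathcal{G}) \le \delta(\mathcal{G})$ drops out at once.

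First I would dispose of the degenerate situations. If $\mathcal{G}$ is already disconnected or trivial, then $\kappa(\mathcal{G}) = 0 \le \delta(\mathcal{G})$ and there is nothing to prove. So I may assume that $\mathcal{G}$ is connected and has at least two vertices.

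Next, choose a vertex $v \in V(\mathcal{G})$ with $\deg(v) = \delta(\mathcal{G})$, and let $N(v)$ denote the set of neighbours of $v$, so that $|N(v)| = \delta(\mathcal{G})$. Form the graph $\mathcal{G}' = \mathcal{G} - N(v)$ by deleting these $\delta(\mathcal{G})$ vertices. In $\mathcal{G}'$ the vertex $v$ has no remaining neighbour. Hence either $\mathcal{G}'$ still contains some vertex other than $v$, in which case $v$ is isolated and $\mathcal{G}'$ is disconnected; or $\mathcal{G}'$ consists of $v$ alone and is therefore trivial. In either case $N(v)$ is a set of $\delta(\mathcal{G})$ vertices whose removal results in a disconnected or trivial graph, so $\kappa(\mathcal{G}) \le |N(v)| = \delta(\mathcal{G})$.

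There is essentially no obstacle in this argument; the only subtlety worth flagging is the treatment of complete graphs, where one genuinely relies on the ``trivial graph'' clause in the definition of $\kappa$. A complete graph $K_n$ has no vertex cut in the naive sense, yet $\kappa(K_n) = n-1 = \delta(K_n)$, and the proof above handles this case uniformly because deleting $N(v)$ from $K_n$ leaves precisely the single vertex $v$.
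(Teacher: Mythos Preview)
Your argument is correct and is in fact the standard proof of this classical inequality. Note, however, that the paper does not supply its own proof of this statement: it lists it among the preliminary facts and refers the reader to Bondy--Murty or Diestel for the proof. Your write-up is exactly the argument one finds in those references, so there is nothing to compare; you have simply filled in what the paper chose to cite.
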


\begin{theorem}\label{Kura} The complete graph  $K_5$ and  the complete bipartite graph $K_{3,3}$ are non-planar. 
\end{theorem}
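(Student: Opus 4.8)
\textbf{Proof proposal for Theorem \ref{Kura}.}

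The plan is to argue by contradiction using Euler's polyhedral formula together with an edge--face counting bound. First I would recall that every connected plane graph satisfies $V - E + F = 2$, where $V, E, F$ denote the numbers of vertices, edges, and faces of a fixed planar embedding. Since both $K_5$ and $K_{3,3}$ are $2$-connected, in any plane embedding each edge lies on the boundary of exactly two faces, and each face is bounded by a closed walk using at least three edges; summing the boundary lengths over all faces and counting each edge twice gives $2E \ge 3F$, i.e. $F \le \tfrac{2}{3}E$. Substituting into Euler's formula yields the standard inequality $E \le 3V - 6$ valid for any simple connected planar graph with $V \ge 3$.

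For $K_5$ I would simply plug in $V = 5$ and $E = \binom{5}{2} = 10$: the inequality would force $10 \le 3\cdot 5 - 6 = 9$, a contradiction, so $K_5$ admits no planar embedding. For $K_{3,3}$ the crude bound $E \le 3V - 6$ gives $9 \le 12$, which is not contradictory, so here I would sharpen the face-length estimate by exploiting bipartiteness: a bipartite simple graph has no cycles of odd length, in particular no triangles, so every face of a plane embedding is bounded by a closed walk of length at least $4$. This upgrades $2E \ge 3F$ to $2E \ge 4F$, hence $F \le \tfrac{1}{2}E$, and Euler's formula now gives $E \le 2V - 4$ for bipartite simple connected planar graphs with $V \ge 3$. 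Applying this to $K_{3,3}$ with $V = 6$ and $E = 9$ forces $9 \le 2\cdot 6 - 4 = 8$, again a contradiction.

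The only genuinely delicate point is justifying the two combinatorial facts about a plane embedding — namely that each edge of a $2$-connected graph borders exactly two distinct faces, and that a face boundary of a simple (resp. simple bipartite) $2$-connected graph has length at least $3$ (resp. $4$). These follow from the Jordan curve theorem and the fact that in a $2$-connected plane graph every face boundary is a cycle; I would either invoke this as standard (it is exactly the content available in \cite{bondy1976graph} and \cite{diestel2005graph}) or give a short self-contained verification. Everything else is an elementary substitution into Euler's formula, so this step is the main obstacle only in the sense of being the sole place requiring topological input rather than arithmetic. $\qed$
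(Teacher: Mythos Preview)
Your argument is correct and is precisely the standard Euler-formula proof; the paper does not give its own proof of this theorem but simply defers to \cite{bondy1976graph} and \cite{diestel2005graph}, which contain exactly this argument. So there is nothing to compare: your proposal supplies the details the paper omits by citation.
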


\begin{theorem}[Ore]\label{Ore} Let $\mathcal{G}$ be a finite and simple graph with $n$ vertices where $n\ge 3$.
If $\deg (v)+\deg (w) \geq n$ for every pair of distinct non-adjacent vertices $v$ and $w$ of $\mathcal{G}$, then $\mathcal{G}$ is Hamiltonian.
\end{theorem}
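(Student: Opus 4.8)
The plan is to argue by contradiction using the classical edge‑maximality and ``rotation'' technique. Suppose $\mathcal{G}$ has $n\ge 3$ vertices and satisfies $\deg(v)+\deg(w)\ge n$ for every pair of distinct non-adjacent vertices $v,w$, but $\mathcal{G}$ is \emph{not} Hamiltonian. First I would observe that inserting a new edge into a graph can only increase vertex degrees, so the degree–sum hypothesis is inherited by every spanning supergraph of $\mathcal{G}$. Adding missing edges one at a time, I may therefore assume $\mathcal{G}$ is edge-maximal among non-Hamiltonian graphs on $n$ vertices satisfying the hypothesis: adding any further edge creates a Hamiltonian cycle. Since $n\ge 3$, the complete graph $K_n$ is Hamiltonian, so $\mathcal{G}\ne K_n$ and there is at least one pair of distinct non-adjacent vertices $u,v$.

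Next I would use edge-maximality at the pair $u,v$. The graph $\mathcal{G}+uv$ is Hamiltonian, and any Hamiltonian cycle of $\mathcal{G}+uv$ must use the new edge $uv$, for otherwise $\mathcal{G}$ itself would be Hamiltonian. Deleting $uv$ from such a cycle leaves a Hamiltonian path $x_1x_2\cdots x_n$ in $\mathcal{G}$ with $x_1=u$ and $x_n=v$.

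The core of the argument is a pigeonhole count on this path. Define
\[
A=\{\, i : 1\le i\le n-1,\ x_i\sim v \,\},\qquad
B=\{\, i : 1\le i\le n-1,\ x_{i+1}\sim u \,\}.
\]
Because $u$ and $v$ are non-adjacent, every neighbour of $v$ equals some $x_i$ with $i\le n-1$, so $|A|=\deg(v)$; re-indexing $k=i+1$ shows likewise $|B|=\deg(u)$. Both $A$ and $B$ lie in the $(n-1)$-element set $\{1,\dots,n-1\}$, while $|A|+|B|=\deg(u)+\deg(v)\ge n>n-1$, so $A\cap B\ne\emptyset$. Fixing $i\in A\cap B$ gives $x_i\sim v$ and $x_{i+1}\sim u$, hence
\[
x_1\,x_2\cdots x_i\,x_n\,x_{n-1}\cdots x_{i+1}\,x_1
\]
is a cycle through all $n$ vertices of $\mathcal{G}$, i.e.\ a Hamiltonian cycle, contradicting the choice of $\mathcal{G}$. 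This contradiction establishes the theorem.

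I expect the only delicate point to be the bookkeeping: setting up $A$ and $B$ so that their sizes are exactly $\deg(v)$ and $\deg(u)$ (this is precisely where non-adjacency of $u$ and $v$ enters), and checking that the displayed ``rotated'' sequence is genuinely a cycle visiting each vertex once (the first block $x_1,\dots,x_i$ and the reversed block $x_n,\dots,x_{i+1}$ together partition $\{x_1,\dots,x_n\}$, and the two non-path edges $x_ix_n$ and $x_{i+1}x_1$ exist by $i\in A\cap B$). No deeper idea is required; the inequality $|A|+|B|>n-1$ does all the real work.
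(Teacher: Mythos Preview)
Your proof is correct and is exactly the classical edge-maximality/rotation argument; the paper itself does not prove this theorem but simply cites standard references (Bondy--Murty, Diestel), where precisely this argument appears. One tiny quibble: the non-adjacency of $u$ and $v$ is not actually needed to obtain $|A|=\deg(v)$ and $|B|=\deg(u)$ (it only guarantees $1\notin A$ and $n-1\notin B$, which keeps the rotated cycle nondegenerate), but this does not affect the validity of your argument.
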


\begin{theorem}\cite{chvatal1973tough}\label{Thtough}
If $\mathcal{G}$ is Hamiltonian, then $\mathcal{G}$ is $1$-tough. 
\end{theorem}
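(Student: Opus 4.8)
The plan is to use the Hamiltonian cycle directly. Fix a Hamiltonian cycle $C$ of $\mathcal{G}$ and let $S\subseteq V(\mathcal{G})$ be any vertex set whose removal disconnects $\mathcal{G}$; write $c(\mathcal{G}-S)$ for the number of connected components of $\mathcal{G}-S$. I would show that $c(\mathcal{G}-S)\le |S|$, which is exactly the assertion that $\mathcal{G}$ is $1$-tough in the sense of Section~\ref{S2}. Observe first that $|S|\ge 1$, since a Hamiltonian graph is connected, so the case $S=\varnothing$ does not arise.

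The key point is that $C-S$ is a spanning subgraph of $\mathcal{G}-S$, and deleting the $|S|$ vertices of $S$ from the cycle $C$ leaves a disjoint union of at most $|S|$ paths: the arcs of $C$ lying between consecutive deleted vertices, where an arc is empty precisely when two deleted vertices are consecutive on $C$. Each nonempty arc is connected in $\mathcal{G}-S$, hence is contained in a single component of $\mathcal{G}-S$; and since these arcs partition $V(\mathcal{G})\setminus S=V(\mathcal{G}-S)$, every component of $\mathcal{G}-S$ contains at least one nonempty arc. Therefore $c(\mathcal{G}-S)$ is at most the number of nonempty arcs, which is at most $|S|$.

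To conclude, suppose for contradiction that $\mathcal{G}$ is not $1$-tough. Then there are an integer $t>1$ and a set $S$ with $|S|<t$ such that $\mathcal{G}-S$ has exactly $t$ components; but then $t=c(\mathcal{G}-S)\le |S|<t$, a contradiction. Hence $\mathcal{G}$ is $1$-tough.

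I expect the only delicate point to be the arc count. I would be careful to state the bound ``at most $|S|$ paths'' with the correct convention for empty arcs when two deleted vertices are adjacent on $C$, to handle the degenerate cases $|S|=1$ and $S$ close to $V(\mathcal{G})$, and to justify cleanly that the arcs of $C-S$ partition $V(\mathcal{G}-S)$ so that each component of $\mathcal{G}-S$ necessarily absorbs at least one whole arc.
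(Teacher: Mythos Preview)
Your argument is correct and is precisely the classical proof due to Chv\'atal: removing $|S|$ vertices from a Hamiltonian cycle leaves at most $|S|$ arcs, so $c(\mathcal{G}-S)\le c(C-S)\le |S|$. Note, however, that the paper does not give its own proof of this statement; it merely cites it as a known preliminary (Theorem~\ref{Thtough} from \cite{chvatal1973tough}), so there is no in-paper proof to compare against---your write-up simply supplies the standard argument that the citation points to.
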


\begin{theorem}
	\label{Useit}
	If $J$ denotes the square matrix of order $n$ with all entries equal to one and $I$ denotes the identity matrix of order $n$ then the eigenvalues of $aI+bJ$ are $a$ with multiplicity $n-1$ and $a+nb$ with multiplicity $1$.
\end{theorem}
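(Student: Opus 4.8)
The plan is to reduce everything to the spectrum of $J$ alone, and then transfer the information to $aI+bJ$ using the elementary observation that if $Jv=\lambda v$ for some nonzero vector $v$, then $(aI+bJ)v=(a+b\lambda)v$. Thus every eigenvector of $J$ is an eigenvector of $aI+bJ$, and a basis of $\mathbb{R}^{n}$ consisting of eigenvectors of $J$ is simultaneously a basis of eigenvectors of $aI+bJ$; consequently the multiset of eigenvalues of $aI+bJ$ is obtained from that of $J$ by applying the map $\lambda\mapsto a+b\lambda$, multiplicities included. So the only real content is to compute the spectrum of $J$.

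For that, let $\mathbf{1}=(1,1,\dots,1)^{T}\in\mathbb{R}^{n}$. A direct multiplication gives $J\mathbf{1}=n\,\mathbf{1}$, so $n$ is an eigenvalue of $J$ with eigenvector $\mathbf{1}$. Since every row of $J$ equals $\mathbf{1}^{T}$, the matrix $J$ has rank $1$, so by the rank--nullity theorem $\dim\ker J=n-1$; equivalently, $\ker J$ is the hyperplane $\{x\in\mathbb{R}^{n}:x_{1}+\cdots+x_{n}=0\}$, which has dimension $n-1$. Hence $0$ is an eigenvalue of geometric multiplicity $n-1$. As $1+(n-1)=n$, these exhaust all eigenvalues; moreover $J$ is real and symmetric, hence diagonalizable, so geometric and algebraic multiplicities agree, and the algebraic multiplicities of $n$ and $0$ are exactly $1$ and $n-1$ respectively.

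Combining the two steps, the eigenvalues of $aI+bJ$ are $a+b\cdot n=a+nb$ with multiplicity $1$ and $a+b\cdot 0=a$ with multiplicity $n-1$, which is the assertion.

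The argument is entirely routine and I do not anticipate any genuine obstacle; the only point that deserves a line of care is the claim that multiplicities are preserved under the passage from $J$ to $aI+bJ$, i.e.\ that one really has a full eigenbasis. This is handled either by the rank--nullity count together with the explicit eigenvector $\mathbf{1}$, or simply by invoking that $J$ (and hence $aI+bJ$) is symmetric and therefore orthogonally diagonalizable.
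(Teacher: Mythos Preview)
Your argument is correct and complete; reducing to the spectrum of $J$ and then shifting via $\lambda\mapsto a+b\lambda$ is the standard route, and you have handled the multiplicity issue carefully. The paper itself does not supply a proof of this theorem---it simply cites Bapat's \emph{Graphs and Matrices}---so there is nothing to compare against beyond noting that your self-contained derivation is exactly what one would find in such a reference.
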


\section{Coprime Order Graph of a Finite Group}
\label{S3}
Let $G$ be a finite group such that $|G|>2$.
The  coprime order graph $\Theta(G)=(V,E)$ is defined  as follows:
The vertex set $V$ is the set  $G$, and any two distinct vertices $x,y$ are adjacent if and only if $\gcd(o(x),o(y))$ is equal to  $1$ or a prime number.
We now study some basic properties of $\Theta(G)$.

\begin{theorem}
	The graph  $\Theta(G)$ satisfies the following properties:
	\begin{enumerate}
		\item [(a).] The domination number of $\Theta(G)$ is $1$ and $\{e\}$ is a dominating set of $\Theta(G)$.
		\item [(b).] The set  $\{x\}$ is a dominating set of $\Theta(G)$  if and only if $o(x)$ is equal to $1$  or a prime number.
	\end{enumerate}
\end{theorem}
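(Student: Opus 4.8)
The plan is to handle the two parts separately, with part (a) being essentially immediate and part (b) reducing to a short case analysis on $o(x)$.

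For part (a), I would begin from the observation that the identity element $e$ has order $1$, so for every vertex $y \neq e$ we have $\gcd(o(e),o(y)) = \gcd(1,o(y)) = 1$, which is one of the allowed values; hence $e$ is adjacent to every other vertex of $\Theta(G)$, and $\{e\}$ is a dominating set. Since $|G|>2$ guarantees that $\Theta(G)$ is nontrivial, any dominating set is nonempty, so the domination number is at least $1$; the existence of the singleton dominating set $\{e\}$ then forces it to equal $1$.

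For the ``if'' direction of part (b), suppose $o(x)$ is $1$ or a prime. If $o(x)=1$ then $x=e$ and we are done by part (a). If $o(x)=p$ for a prime $p$, then for any vertex $y\neq x$ the integer $\gcd(p,o(y))$ divides $p$ and hence equals $1$ or $p$; in either case $x\sim y$, so $\{x\}$ dominates $\Theta(G)$. For the ``only if'' direction I would argue by contraposition: assume $o(x)$ is neither $1$ nor a prime, i.e. $o(x)$ is composite, so $o(x)\geq 4$. Then $x\neq x^{-1}$, since $x=x^{-1}$ would force $x^2=e$ and hence $o(x)\in\{1,2\}$; thus $x^{-1}$ is a vertex of $\Theta(G)$ distinct from $x$ with $o(x^{-1})=o(x)$. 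Consequently $\gcd(o(x),o(x^{-1}))=o(x)$, which is composite and therefore neither $1$ nor a prime, so $x\not\sim x^{-1}$. Hence $x$ fails to dominate $x^{-1}$ and $\{x\}$ is not a dominating set, which is the desired contrapositive.

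I do not expect a serious obstacle here; the proof is short. The only points that require a little care are the trichotomy ``$o(x)$ equals $1$, is prime, or is composite'' (so that ``$1$ or a prime'' is exactly the complement of ``composite''), and the remark that the degenerate possibility $x^{-1}=x$ is ruled out precisely because a composite order is at least $4$.
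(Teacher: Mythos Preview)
Your proposal is correct and follows essentially the same approach as the paper: part (a) via adjacency of $e$ to everything, and part (b) via the $x^{-1}$ trick for the contrapositive. If anything, your ``if'' direction is slightly more careful than the paper's, since you note that $\gcd(p,o(y))$ \emph{divides} $p$ rather than asserting it equals $o(x)$.
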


\begin{proof}
	
	\begin{enumerate}
		
		\item [(a).] Since $\gcd(o(a),o(e))=o(e)=1$, we find that $e$ is adjacent to $a$ for all  $a\in G$.
		Hence, the set $\{e\}$ is a dominating set of $\Theta(G)$, which implies that the domination number of $\Theta(G)$ is $1$.

		\item [(b).] Let $x\in G$.
		If $o(x)$ is equal to $1$  or a  prime number, then $\gcd(o(a),o(x))=o(x)$ which is equal to $1$  or a prime number for all  $a\in G$. Thus, $x$ is adjacent to $a$ for all $a\in G$, which implies that  $\{x\}$ is a dominating set of $\Theta(G)$.
		\\
		Conversely, let  $\{x\}$ be a dominating set of $\Theta(G)$.
		Assume the contrary that $o(x)$ is neither $1$ nor a prime  number.
		Then, $o(x)$ is composite which implies that $x\neq x^{-1}$.
		Thus, $\gcd(o(x),o(x^{-1}))=o(x)$, which is composite.
		Hence, $x$ is not adjacent to $x^{-1}$.
		Since $x\neq x^{-1}$, it contradicts the fact that $\{x\}$ is a dominating set of $\Theta(G)$.
		Hence, $o(x)$ is either equal to $1$ or a  prime number.
		
	\end{enumerate}
\end{proof}

\begin{theorem}\label{Th2}
The graph $\Theta(G)$ is connected and the diameter of $\Theta(G)$ is at most $2$.
\end{theorem}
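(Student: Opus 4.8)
The plan is to exploit the universal vertex that the identity element provides. Since $o(e)=1$, we have $\gcd(o(x),o(e))=1$ for every $x\in G$, so $e$ is adjacent to every other vertex of $\Theta(G)$; this is exactly part (a) of the preceding theorem, which I would simply cite. From this, connectedness is immediate: given any two vertices $x,y$, either they coincide, or they are joined directly by an edge, or the walk $x,\,e,\,y$ is a path of length $2$ connecting them. Hence any pair of vertices lies in the same component, so $\Theta(G)$ is connected.

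For the diameter bound, I would argue that for distinct non-adjacent vertices $x,y$ the path $x\sim e\sim y$ witnesses $d(x,y)\le 2$, and for adjacent distinct vertices $d(x,y)=1$; taking the maximum over all pairs gives $\operatorname{diam}(\Theta(G))\le 2$. One should also note $|G|>2$ (part of the standing assumption on $\Theta(G)$), which guarantees the graph is nontrivial and that the diameter is a well-defined quantity; if one wishes, a single example such as a composite-order element and its inverse, or two elements of equal composite order, shows the bound $2$ is attained, but this is not needed for the stated inequality.

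There is essentially no obstacle here: the only thing to be careful about is to invoke the already-established fact that $e$ dominates $\Theta(G)$ rather than re-deriving it, and to phrase the "length $2$" argument cleanly so that it covers the degenerate cases ($x=y$, or $x\sim y$) as well. So the proof is a short three-line consequence of the domination result.
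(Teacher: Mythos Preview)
Your proposal is correct and follows essentially the same argument as the paper: both use the fact that the identity $e$ is adjacent to every vertex (since $o(e)=1$) to produce the length-$2$ path $x\sim e\sim y$ between any two non-adjacent vertices, giving connectedness and $\operatorname{diam}(\Theta(G))\le 2$. The only cosmetic difference is that you cite the domination result for $\{e\}$ whereas the paper re-derives it in one line.
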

\begin{proof}
Let $x,y\in \Theta(G)$.
If $\gcd(o(x),o(y))$ is equal to $1$ or  a prime number, then $x$ is adjacent to $y$, and we are done.
If $\gcd(o(x),o(y))$ is composite then $x$ and $y$ are not adjacent.
Consider the identity element $e$ of $G$.
Since $o(e)=1$, so $x$  and $y$ are both  adjacent to $e$.
Thus, we find that there always exists a path of length $2$ between any two non-adjacent vertices $x,y\in \Theta(G)$. 
Thus, $\Theta(G)$ is connected and  the diameter of $(\Theta(G))$ is at most $2$.
\end{proof}

\begin{theorem}
	If the girth of $\Theta(G)$ is finite, then it equals $3$.
\end{theorem}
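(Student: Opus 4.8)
The plan is to show that whenever $\Theta(G)$ contains any cycle at all, it already contains a triangle, so the girth (being at least $3$ in any simple graph) must equal exactly $3$. The natural strategy is to exhibit an explicit triangle under a mild hypothesis and then argue that the absence of such a triangle forces $\Theta(G)$ to be a tree (in fact a star), which has infinite girth.

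First I would recall from Theorem~\ref{Th2} that the identity $e$ is adjacent to every other vertex of $\Theta(G)$, since $o(e)=1$ and hence $\gcd(o(e),o(x))=1$ for all $x\in G$. So $\Theta(G)$ always contains the star with center $e$ on all $|G|$ vertices. Now suppose $\Theta(G)$ has at least one edge between two \emph{non-identity} vertices $x$ and $y$, i.e. $x\sim y$ with $x,y\neq e$. Then $\{e,x,y\}$ induces a triangle: $x\sim e$, $y\sim e$, and $x\sim y$ by assumption. Hence the girth is at most $3$, and since $\Theta(G)$ is simple its girth is at least $3$, giving girth exactly $3$.

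The remaining case is that $\Theta(G)$ has \emph{no} edge joining two non-identity vertices; then every edge is incident to $e$, so $\Theta(G)$ is precisely the star $K_{1,|G|-1}$, which is acyclic and therefore has infinite (undefined) girth. This is exactly the contrapositive of the statement: if the girth is finite, we are not in this case, so some edge joins two non-identity vertices and the triangle argument above applies. I would also remark, for concreteness, that whenever $|G|>2$ there is a non-identity element $a$; if $o(a)$ is prime then $a$ and $a^{-1}$ (if distinct) or any other element of prime order is adjacent to $a$, and more generally the star case occurs only in rather degenerate situations — but this extra remark is not needed for the proof.

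I do not anticipate a genuine obstacle here; the only point requiring a moment's care is the bookkeeping around whether $x=y^{-1}$ or whether distinct non-identity vertices even exist, which is handled by the $|G|>2$ hypothesis built into the definition of $\Theta(G)$. The proof is essentially a two-line case split once one observes that $e$ is universally adjacent.
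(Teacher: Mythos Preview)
Your proof is correct and follows essentially the same approach as the paper: both observe that $e$ is adjacent to every vertex, so any edge $x\sim y$ between two non-identity vertices yields a triangle $x\sim e\sim y\sim x$. You spell out the contrapositive (no such edge $\Rightarrow$ star $\Rightarrow$ infinite girth) more explicitly than the paper does, but the argument is the same.
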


\begin{proof}
The proof follows from the simple fact that for any two distinct vertices $x, y\in G$ where $x,y\neq e$, there  exists a path of length $2$ given by $x\sim e\sim y$ from $x$ to $y$.
If $x,y$ are adjacent for some $x$ and $y$, then the  girth of $\Theta(G)$ is finite and it equals  $3$.
\end{proof}

\begin{theorem}
The graph $\Theta(G)$ is Eulerian if and only if $G$ is an odd-order group, and every non-identity element has prime order.
\end{theorem}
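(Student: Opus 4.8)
The plan is to reduce everything to a parity count on vertex degrees. By Theorem~\ref{Th2} the graph $\Theta(G)$ is connected, so Theorem~\ref{Th1} applies: $\Theta(G)$ is Eulerian if and only if $\deg(x)$ is even for every $x\in G$. I would first dispose of the identity: since $o(e)=1$ we have $\gcd(o(e),o(y))=1$ for all $y$, so $\deg(e)=|G|-1$. If $|G|$ is even this is odd and $\Theta(G)$ is not Eulerian, which already shows that odd order is necessary. For the reverse implication, if $|G|$ is odd and every non-identity element has prime order, then for any two distinct vertices $x,y$ the number $\gcd(o(x),o(y))$ is $1$ (if one of them is $e$, or the two prime orders differ) or a prime (if the orders coincide); hence $\Theta(G)=K_{|G|}$, which is connected and $(|G|-1)$-regular with $|G|-1$ even, so it is Eulerian by Theorem~\ref{Th1}.

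It remains to prove that if $|G|$ is odd and $\Theta(G)$ is Eulerian then every non-identity element has prime order. The idea is to partition $G$ by element order: set $G_d=\{y\in G: o(y)=d\}$. Two facts drive the argument: $|G_1|=1$, and $|G_d|$ is even for every $d>1$. Indeed, for $d=2$ the set $G_d$ is empty because $|G|$ is odd, and for $d>2$ the map $y\mapsto y^{-1}$ is a fixed-point-free involution on $G_d$, so $|G_d|$ is even. Now fix $x\in G$ with $n:=o(x)$ composite, and let $S_x=\{y\in G:\gcd(n,o(y))\text{ is composite}\}$. Since $\gcd(n,n)=n$ is composite, $x\in S_x$, and the non-neighbours of $x$ are exactly the elements of $S_x\setminus\{x\}$, so $\deg(x)=(|G|-1)-(|S_x|-1)=|G|-|S_x|$. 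Because $\gcd(n,1)=1$ is not composite, $e\notin S_x$, so $S_x$ is a disjoint union of classes $G_d$ with $d>1$; by the two facts above $|S_x|$ is even. Hence $\deg(x)=|G|-|S_x|$ is odd, contradicting that every vertex of the Eulerian graph $\Theta(G)$ has even degree. Therefore no element of $G$ has composite order, i.e.\ every non-identity element has prime order.

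The only real obstacle is the parity bookkeeping in the second paragraph: one must correctly argue that $|G_d|$ is even for all $d>1$ (the fixed-point-free involution $y\mapsto y^{-1}$, together with the observation that odd order rules out elements of order $2$), and one must be careful that $x$ itself lies in $S_x$ so that the degree formula reads $\deg(x)=|G|-|S_x|$ rather than $|G|-1-|S_x|$. Once these points are settled, the equivalence follows immediately from Theorems~\ref{Th1} and~\ref{Th2}.
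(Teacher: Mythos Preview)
Your proof is correct and follows essentially the same approach as the paper: both reduce to degree parity via Theorems~\ref{Th1} and~\ref{Th2}, use $\deg(e)=|G|-1$ to force odd order, and then exploit the involution $y\mapsto y^{-1}$ to obtain a parity contradiction at any element of composite order. The only cosmetic differences are that the paper counts neighbours (its set $E_a$) while you count non-neighbours (your set $S_x$), and you phrase the converse as $\Theta(G)\cong K_{|G|}$ rather than computing degrees directly.
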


\begin{proof}
Suppose the graph $\Theta(G)$ is Eulerian.
Using Theorem \ref{Th2}, we find that $\Theta(G)$ is connected.
Since $\Theta(G)$ is Eulerian, using Theorem \ref{Th1} we find that every vertex in $\Theta(G)$ has an even degree.
Since the identity element $e$ of $G$ is connected to every other vertex in $\Theta(G)$, $\deg(e)=|G|-1$.
Since $\deg(e)$ must be  even, the order of $G$ must be odd. Thus, $G$ has no elements of order $2$.
Now let $a$ be any non-identity element of $G$. We claim that $a$ has prime order.
Assume that the order of $a$ is composite.
Let us consider the set 
\begin{align*}
E_a=\{b\in G:  \gcd(o(a),o(b)) \text{ is equal to } 1 \text{ or a prime number}\}.
\end{align*}
We notice that $b\in E_a$ if and only if $b^{-1}\in E_a$.
Thus, the number of non-identity elements present in $E_a$ (if any) are even.
Also, the identity element $e$ of $G$ is in $E_a$.
Thus,  $E_a$ has an odd number of elements.
Thus, $|E_a|$ is an odd number.
Let $E_a^{*}=E_a\setminus \{a\}$. Since the order of $a$ is composite, so $a\notin E_a$.
Thus, $E_a=E_a^{*}$.
We further note that the elements of  $E_a^{*}$ are those vertices of $\Theta(G)$ which are adjacent to the vertex $a$ of $\Theta(G)$.
Thus,  $|E_a|=|E_a^{*}|=\deg(a)$. 
Since  $\Theta(G)$ is Eulerian, $\deg(a)$ must be even, but we have proved that $\deg(a)$ is odd, which is contradictory.
Hence, our initial assumption that order of $a$ is composite, is false.
Thus, the order of $a$ must be a prime number.
Thus, we find that if $\Theta(G)$ is Eulerian, then the  order of $G$ is odd and every non-identity element has prime order.

Conversely, assume that $|G|$ is odd and every non-identity element of $G$ has  prime order.
Thus, for any element  $a\in  G$, we have   ${E_a}^*=G\setminus\{a\}.$
Also,
\begin{equation}\label{Eq5}
\deg(a)=|E_a^*|=|G\setminus\{a\}|.
\end{equation}
Since $|G|$ is an odd number, so $|G\setminus\{a\}|$ is an even number  for every $a\in G$.
Using Equation(\ref{Eq5}), we find that for every $a\in G$, $\deg(a)$ must be an even number.
Thus, $\Theta(G)$ is connected and every vertex in $\Theta(G)$ has  an even degree.
Using  Theorem \ref{Th1}, we conclude  that $\Theta(G)$ is Eulerian. 
Thus, the result follows.

\end{proof}

\begin{theorem}
	\label{Theorem1}
	The graph $\Theta(G)$ is complete if and only if $G$ has no elements of composite order.
\end{theorem}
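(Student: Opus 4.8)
The statement is a clean biconditional, so the plan is to argue both directions directly from the definition of adjacency in $\Theta(G)$, namely that $x\sim y$ iff $\gcd(o(x),o(y))\in\{1\}\cup\{\text{primes}\}$. First I would handle the easy direction: suppose $G$ has no elements of composite order, so that every $g\in G$ has order $1$ or a prime. Then for any two distinct vertices $x,y$, the value $\gcd(o(x),o(y))$ divides $o(x)$, which is $1$ or a prime; hence the gcd is itself $1$ or a prime, so $x\sim y$. Since this holds for every pair, $\Theta(G)$ is complete.

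For the converse, I would argue by contraposition: assume $G$ has an element $a$ of composite order and exhibit a non-adjacent pair, so that $\Theta(G)$ is not complete. The natural choice is the pair $\{a,a\}$—but of course a vertex is not adjacent to itself, so instead I would use $a$ and $a^{-1}$, mirroring the trick already used in the proof of part (b) of the first theorem and in the Eulerian theorem. Since $o(a)$ is composite, in particular $o(a)>2$, so $a\neq a^{-1}$, giving two distinct vertices. Now $o(a^{-1})=o(a)$, so $\gcd(o(a),o(a^{-1}))=o(a)$, which is composite and therefore neither $1$ nor a prime. Hence $a\not\sim a^{-1}$, and $\Theta(G)$ fails to be complete.

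I do not anticipate a genuine obstacle here; the only point requiring a moment's care is making sure the ``witness'' vertices are genuinely distinct, which is exactly why one passes to $a$ and $a^{-1}$ rather than trying to use $a$ alone, and why the hypothesis ``composite order'' (as opposed to merely ``non-prime order'', which would include order $1$) is used—composite order forces $o(a)\geq 4$ and in particular $a\neq a^{-1}$. Everything else is a one-line gcd computation, so the write-up should be short, essentially parallel in structure to the preceding theorems in this section.
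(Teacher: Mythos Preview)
Your proposal is correct and matches the paper's proof essentially line for line: the paper also uses the pair $g,g^{-1}$ for an element $g$ of composite order (noting $g\neq g^{-1}$) to witness non-completeness, and handles the converse by the same one-line gcd observation. The only cosmetic difference is that the paper phrases the forward direction as a proof by contradiction rather than contraposition.
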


\begin{proof}
	Suppose $\Theta(G)$ is complete.
	Let $g\in G$ be an element of composite order. Clearly $g\neq g^{-1}$.
	Since $\gcd(o(g),o(g^{-1}))=o(g)$, we find that $g$ is not adjacent to $g^{-1}$ in $\Theta(G)$.
	Thus, $\Theta(G)$ is not complete which is a contradiction.
	Hence, we conclude that $G$ has no element whose order is composite.
	Conversely, if all elements of $G$ have prime order, then  for any two elements $x,y \in G$, $\gcd(o(x),o(y))$ is equal to $1$ or a prime number, which in turn implies that $\Theta(G)$ is complete. 
\end{proof}

\begin{corollary}\label{Th3}
Let $G$ be a finite cyclic group of order $n$.
Then, $\Theta(G)$ is complete if and only if $n$ is a prime number.
\end{corollary}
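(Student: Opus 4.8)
The plan is to obtain this as an immediate specialization of Theorem \ref{Theorem1}, which states that $\Theta(G)$ is complete if and only if $G$ has no element of composite order. So the whole argument reduces to deciding, for a cyclic group $G$ of order $n$ with $n\ge 3$ (recall $\Theta$ is only defined for $|G|>2$), exactly when $\mathbb{Z}_n$ contains an element whose order is composite. The relevant structural fact is that in a cyclic group of order $n$ the set of orders of elements is precisely the set of divisors of $n$; in particular $\mathbb{Z}_n$ possesses an element of order $n$ (any generator), and more generally an element of order $d$ for every $d\mid n$.

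For the forward implication I would argue by contrapositive: if $n$ is not prime then, since $n\ge 3$, $n$ is composite, so a generator of $\mathbb{Z}_n$ is an element of composite order, and Theorem \ref{Theorem1} gives that $\Theta(G)$ is not complete. For the converse, if $n$ is prime then the only divisors of $n$ are $1$ and $n$, so every element of $\mathbb{Z}_n$ has order $1$ or order $n$, hence order equal to $1$ or a prime; $G$ therefore has no element of composite order and Theorem \ref{Theorem1} yields that $\Theta(G)$ is complete.

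I do not expect any genuine obstacle here, as the statement is essentially a restatement of Theorem \ref{Theorem1} in the cyclic case. The only point requiring a moment of care is the equivalence ``$n$ has a composite divisor'' $\iff$ ``$n$ is composite'': the nontrivial direction is trivial because $n$ is always a divisor of itself, so no smaller divisor needs to be inspected. The running hypothesis $|G|>2$ is what lets the dichotomy be phrased cleanly as prime versus composite, since it excludes the degenerate cases $n=1$ and $n=2$.
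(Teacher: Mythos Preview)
Your proposal is correct and follows exactly the intended route: the paper states this result as an immediate corollary of Theorem~\ref{Theorem1} without giving a separate proof, and your argument simply spells out the obvious specialization---a cyclic group of order $n$ has an element of composite order (namely any generator) if and only if $n$ itself is composite. There is nothing to add or correct.
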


\begin{corollary}
	\label{product}
Let $G$ be a finite commutative group  of order $p^m$ where $p$ is a prime and $m>1$.
Then, $\Theta(G)$ is complete if and only if $G\cong (\mathbb Z_p)^m$.
\end{corollary}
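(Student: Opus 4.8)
The plan is to reduce the statement to Theorem~\ref{Theorem1} and then invoke the structure of finite abelian $p$-groups. By Theorem~\ref{Theorem1}, $\Theta(G)$ is complete if and only if $G$ has no elements of composite order. Since $|G|=p^m$, Lagrange's theorem forces the order of every element of $G$ to be a power of $p$; among these, $1$ and $p$ are the only non-composite values. Hence the condition ``$G$ has no element of composite order'' is equivalent to ``every element $x\in G$ satisfies $o(x)\in\{1,p\}$'', i.e. $x^p=e$ for all $x\in G$. So the whole statement becomes: an abelian group of order $p^m$ satisfies $x^p=e$ for all $x$ if and only if $G\cong(\mathbb{Z}_p)^m$.

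For the forward direction, suppose $x^p=e$ for every $x\in G$. Then $G$ (being abelian with every non-identity element of order $p$) carries the structure of a vector space over the field $\mathbb{F}_p$, with scalar multiplication given by repeated addition in $G$; equivalently, one may apply the fundamental theorem of finite abelian groups to write $G\cong\mathbb{Z}_{p^{a_1}}\times\cdots\times\mathbb{Z}_{p^{a_r}}$ and observe that the hypothesis $x^p=e$ forces each $a_i=1$. In either case $G$ is elementary abelian of order $p^m$, so as an $\mathbb{F}_p$-vector space it has dimension $m$, and therefore $G\cong(\mathbb{Z}_p)^m$.

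For the converse, if $G\cong(\mathbb{Z}_p)^m$ then every non-identity element has order exactly $p$, so in particular no element has composite order, and Theorem~\ref{Theorem1} gives that $\Theta(G)$ is complete.

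I do not anticipate a genuine obstacle here: the only non-elementary ingredient is the classification of elementary abelian $p$-groups (equivalently the fundamental theorem of finite abelian groups, already part of standard group theory as in \cite{dummit2004abstract}), and everything else is a short deduction from Theorem~\ref{Theorem1}. The one point to state carefully is the reduction in the first paragraph — namely that in a group of order $p^m$ the absence of composite-order elements is exactly the identity $x^p=e$ — since that is what lets Theorem~\ref{Theorem1} be applied.
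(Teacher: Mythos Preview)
Your proposal is correct and follows essentially the same route as the paper: reduce via Theorem~\ref{Theorem1} to the condition that $G$ has no elements of composite order, then use the structure theorem for finite abelian groups to conclude that an abelian $p$-group with this property must be $(\mathbb{Z}_p)^m$. The paper argues the forward direction by assuming some cyclic factor $\mathbb{Z}_{p^{\alpha_i}}$ has $\alpha_i>1$ and exhibiting an explicit element of composite order, while you phrase it as the equivalent condition $x^p=e$ for all $x$ (and optionally invoke the $\mathbb{F}_p$-vector-space viewpoint); these are cosmetic differences only, and your treatment of the converse is a touch more explicit than the paper's, which simply declares it trivial.
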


\begin{proof}
We know that any finite commutative group is a direct product of cyclic groups.
Hence, $G\cong \mathbb Z_{p^{\alpha_1}}\times \mathbb Z_{p^{\alpha_2}}\times \cdots \times \mathbb Z_{p^{\alpha_k}}$
where $\alpha_1+\alpha_2+\cdots+\alpha_k=m$ and $1\le \alpha_i\le m$.
Assume that $\Theta(G)$ is complete.
Now if $\alpha_i=1$ for all $1\le i\le k$, we are done.
So, let us assume that there exists  $\alpha_i$ such that $\alpha_i>1$ for some $i$.
Since $\mathbb Z_{p^{\alpha_i}}$ is a cyclic group of order $p^{\alpha_i}$, it has $\varphi(p^{\alpha_i})\ge 2$ generators, and hence we can find $x\in \mathbb Z_{p^{\alpha_i}}$ such that $o(x)=p^{\alpha_i}$.
Consider the element $\mathbf{x}=(0,0,\dots,0,x,0\dots,0,0)\in G$.
Clearly $o(\mathbf{x})$ is composite which contradicts Theorem \ref{Theorem1}. Hence, $\alpha_i=1$ for all $1\le i\le m$ which implies that $G\cong (\mathbb{Z}_p)^m.$
 The converse part is trivial and hence skipped.
\end{proof}

\begin{corollary}\label{Th31}
The graph  $\Theta(\mbox D_n)$  for $n\ge 2$ is complete  if and only if $n$ is prime.
\end{corollary}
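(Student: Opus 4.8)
The plan is to reduce the whole statement to Theorem \ref{Theorem1}, which characterizes completeness of $\Theta(G)$ by the absence of elements of composite order in $G$. So the task becomes purely group-theoretic: decide for which $n \ge 2$ the dihedral group $\mbox{D}_n$ has an element whose order is composite. Note that $\Theta(\mbox{D}_n)$ is well defined since $|\mbox{D}_n| = 2n \ge 4 > 2$.

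First I would record the order structure of $\mbox{D}_n$. Writing $\mbox{D}_n = \langle r,s \mid r^n = s^2 = e,\ srs = r^{-1}\rangle$, the $2n$ elements are the $n$ rotations $e, r, r^2, \dots, r^{n-1}$ together with the $n$ reflections $s, rs, \dots, r^{n-1}s$. Every reflection has order $2$, and the rotation $r^k$ has order $n/\gcd(n,k)$, which is a divisor of $n$. Consequently the set of element orders occurring in $\mbox{D}_n$ is exactly $\{1,2\}$ together with the divisors of $n$.

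For the "if" direction, suppose $n$ is prime. Then its only divisors are $1$ and $n$, so each non-identity rotation has order $n$ (a prime) and each reflection has order $2$ (a prime); hence $\mbox{D}_n$ has no element of composite order, and Theorem \ref{Theorem1} yields that $\Theta(\mbox{D}_n)$ is complete. The boundary value $n=2$ is included, since $2$ is prime and every non-identity element of $\mbox{D}_2$ has order $2$. For the "only if" direction I would argue contrapositively: if $n \ge 2$ is not prime then $n$ is composite, and the rotation $r$ has order exactly $n$, which is composite; so $\mbox{D}_n$ does have an element of composite order, and Theorem \ref{Theorem1} shows $\Theta(\mbox{D}_n)$ is not complete. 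This settles both implications.

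I do not expect a genuine obstacle here; the argument is a direct parallel to the cyclic case in Corollary \ref{Th3}. The only point requiring care is the bookkeeping of element orders in $\mbox{D}_n$ — in particular, observing that no element has order strictly between the divisors of $n$ and the value $2$ — after which the equivalence with the primality of $n$ is immediate from Theorem \ref{Theorem1}.
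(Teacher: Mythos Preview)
Your argument is correct, but it proceeds along a different line than the paper's own proof. You invoke Theorem~\ref{Theorem1} directly and then carry out the group-theoretic check that $\mbox{D}_n$ has an element of composite order precisely when $n$ is composite. The paper instead partitions $\mbox{D}_n = A \cup B$ into rotations $A=\{r^i\}$ and reflections $B=\{sr^i\}$, observes that the induced subgraph on $A$ is isomorphic to $\Theta(\mathbb Z_n)$, that the induced subgraph on $B$ is $K_n$, and that every vertex of $A$ is adjacent to every vertex of $B$; from this it deduces that $\Theta(\mbox{D}_n)$ is complete if and only if $\Theta(\mathbb Z_n)$ is, and then appeals to Corollary~\ref{Th3}. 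Your route is shorter and entirely self-contained once Theorem~\ref{Theorem1} is in hand; the paper's decomposition, while slightly more indirect here, has the advantage that the same $A\cup B$ structure is reused later (e.g., in setting up the block form of $Q(\Theta(\mbox{D}_n))$ in Section~\ref{S5}).
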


\begin{proof}
We know that the dihedral group of order $2n$ has the following presentation:
$$\mbox{D}_n=\{\langle r,s\rangle : r^n=s^2=1,rs=sr^{-1}\}.$$
We partition the vertex set of $\Theta(\mbox D_n)$ as $\mbox D_n=A\cup B$ where  $A=\{r^i:0\le i\le n-1\}$,  and $B=\{sr^i:0\le i\leq n-1\}$.
The graph induced by the elements of $A$ forms a subgraph of $\Theta(\mbox D_n)$, and is isomorphic to $\Theta(\mathbb Z_n)$.
Since every element of $B$  has order $2$, so the subgraph induced by the elements  of $B$ is isomorphic to $K_n$.
Also since the order of each member of $B$ is $2$, every vertex of  $A$ is adjacent to every vertex of $B$. Using the above facts, we observe that $\Theta(\mbox D_n)$ is complete if and only if $\Theta(\mathbb Z_n)$ is complete.
Using Corollary \ref{Th3}, $\Theta(\mathbb Z_n)$ is complete if and only if $n$ is prime.
Thus, $\Theta(\mbox D_n)$ is complete if and only if $n$ is prime.

\end{proof}

\begin{theorem}
The graph $\Theta(\mathbb Z_n)$ is planar if and only if $n=3$ or $n=2^i$ where $i\in \mathbb{N}$.
\end{theorem}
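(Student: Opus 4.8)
The plan is to handle the two implications separately, establishing non-planarity via Kuratowski's theorem (Theorem~\ref{Kura}) and planarity by exhibiting an explicit embedding.

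For the ``if'' direction, I would first work out the adjacency structure of $\Theta(\mathbb Z_{2^i})$ (recall $n\ge 3$ by our standing assumption). Every element of $\mathbb Z_{2^i}$ has order a power of $2$, and two elements of orders $2^a$ and $2^b$ with $a\le b$ satisfy $\gcd(2^a,2^b)=2^a$, which is $1$ or a prime precisely when $a\le 1$. Since $\mathbb Z_{2^i}$ has a unique element of order $1$ (namely $0$) and, as $\varphi(2)=1$, a unique element of order $2$ (namely $2^{i-1}$), these two vertices are adjacent to each other and to every other vertex, while the remaining $2^i-2$ vertices (those of order at least $4$) are pairwise non-adjacent. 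Hence $\Theta(\mathbb Z_{2^i})$ consists of the edge $\{0,2^{i-1}\}$ together with $2^i-2$ further vertices each joined to both its endpoints; drawing that edge and nesting the resulting $2^i-2$ triangles around it gives a planar embedding. The case $n=3$ is immediate since $\Theta(\mathbb Z_3)\cong K_3$.

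For the ``only if'' direction I would argue the contrapositive: assuming $n\ne 3$ and $n$ is not a power of $2$, I would produce a subgraph isomorphic to $K_5$ or $K_{3,3}$, which forces non-planarity (a subgraph of a planar graph being planar, together with Theorem~\ref{Kura}). The recurring tool is that $0$ is adjacent to everything, any two elements of coprime orders are adjacent, any two elements whose orders are distinct primes are adjacent, and an element of order $p$ is adjacent to every element whose order is a power of $p$. Then I would split into cases. If $n$ is not a prime power, let $p<q$ be its two smallest prime divisors: the set comprising $0$, the $\varphi(p)=p-1$ elements of order $p$ and the $\varphi(q)=q-1$ elements of order $q$ is a clique on $p+q-1$ vertices. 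If $q\ge 5$ this clique already contains $K_5$; if instead $\{p,q\}=\{2,3\}$, so that $6\mid n$, the clique has only $4$ vertices, but $\mathbb Z_n$ then contains an element of order $6$, which is adjacent to $0$ and to the elements of order $2$ and $3$, so adjoining it yields a $K_5$. If $n=p^m$ with $p$ odd, then either $p\ge 5$, in which case $\{0\}$ together with the $p-1$ elements of order $p$ is a clique on $p\ge 5$ vertices, or $p=3$ and $m\ge2$ (because $n\ne3$), in which case $\{0\}$ together with the two elements of order $3$ forms one part and any three of the $\varphi(9)=6$ elements of order $9$ form the other part of a $K_{3,3}$, since each element of order dividing $3$ is adjacent to each element of order $9$.

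The $\varphi$-counts and the repeated ``$\gcd$ is $1$ or prime'' checks are routine. The point that requires the most care is that the family $n=3^m$ with $m\ge2$ genuinely needs the $K_{3,3}$ argument: its clique number is only $4$, so the clique argument used in every other case fails there. Consequently the main thing to get right is that the case division is exhaustive and that the two delicate ``small prime'' situations ($\{p,q\}=\{2,3\}$ and $n=3^m$) are handled by the order-$6$ element and by the $K_{3,3}$ respectively.
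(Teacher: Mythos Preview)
Your proposal is correct and follows essentially the same strategy as the paper: split on the prime factorisation of $n$, exhibit a $K_5$ or $K_{3,3}$ subgraph for non-planarity via Theorem~\ref{Kura}, and describe an explicit planar embedding for $n=2^i$ and $n=3$. The one notable difference is in the multi-prime case: rather than splitting into $q\ge 5$ versus $\{p,q\}=\{2,3\}$ as you do, the paper uniformly takes three elements of prime order together with $0$ and a \emph{generator} of $\mathbb Z_n$ to form its $K_5$ (since a generator has order $n$ and $\gcd(n,p)=p$ for every prime $p\mid n$, it is adjacent to every prime-order element). This single observation covers your $\{2,3\}$ sub-case without needing the order-$6$ element, so the paper's argument is slightly more uniform there; otherwise the two proofs coincide.
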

\begin{proof}
Assume that $\Theta(\mathbb Z_n)$ is planar.
Let $n=p_1^{\alpha_1}p_2^{\alpha_2}\cdots p_k^{\alpha_k}$ where $p_i$'s are primes, and $\alpha_i$'s are positive integers.
Assume that  $\alpha_i \ge 1 $ for   $i=i_0$ and $i=i_1$,  then we can choose the elements $p_1^{\alpha_1}p_2^{\alpha_2}\cdots p_{i_0}^{\alpha_{i_0}-1} \cdots p_k^{\alpha_k}$ and $p_1^{\alpha_1}p_2^{\alpha_2}\cdots p_{i_1}^{\alpha_{i_1}-1} \cdots p_k^{\alpha_k}$, and consider the subgroup generated by these two  elements.
Every element in the subgroup generated by $p_1^{\alpha_1}p_2^{\alpha_2}\cdots p_{i_0}^{\alpha_{i_0}-1} \cdots p_k^{\alpha_k}$ except the identity element has order $p_{i_0}$, which is prime.
Every element in the subgroup generated by $p_1^{\alpha_1}p_2^{\alpha_2}\cdots p_{i_1}^{\alpha_{i_1}-1} \cdots p_k^{\alpha_k}$ except the identity element has order $p_{i_1}$, which is also prime.
Thus, we obtain $p_{i_0}+p_{i_1}-2$ elements of prime order.
Note that $ p_{i_0}+p_{i_1}\ge 5$ is always true, and hence we can always get at least $3$ elements of prime order.
If we take  $3$ elements of prime order, together with the zero element and a generator of $\mathbb Z_n$,  then we can find  $5$ elements that are adjacent to each other.
Thus, there always exists  a subgraph in $\Theta(\mathbb Z_n)$ which is isomorphic to $K_5$.
By  Theorem \ref{Kura}, we conclude that $\Theta(\mathbb Z_n)$ is not planar, which is contrary to our assumption.
Hence, we cannot find  $i_0$ and $i_1$ such that $\alpha_{i_0}, \alpha_{i_1}\ge 1$.
Thus, $\alpha_i\ge 1$ for at most one $i$.
Hence, $n=p^i$ where $p$ is a prime and $i\ge 1$.

Again if $p\ge 5,$ we can consider the element $p^{i-1}$.
We again notice that all the elements in the set $\{p^{i-1},2p^{i-1},3p^{i-1},\ldots, (p-1)p^{i-1}\}$  have prime order, and hence are adjacent to all other members of the graph $\Theta(\mathbb Z_n)$.
Since $p\ge 5$, so we have $p-1\ge 4$.
Hence, if we take  $4$  elements  from the set $\{p^{i-1},2p^{i-1},3p^{i-1},\ldots, (p-1)p^{i-1}\}$ together with the zero element of $\mathbb Z_n$, then the graph induced by them is isomorphic to $K_5$, and hence by Theorem \ref{Kura} we find that $\Theta(\mathbb Z_n)$ is not planar. Thus, we are left with primes $p=2,3$.
Hence, either $n=2^i$ or $n=3^i$ for some $i\in \mathbb{N}$.
We claim that the graph $\Theta(\mathbb Z_n)$ is planar when  $n=2^i$ for all $i\ge 1$.
If $n=2^i$ for some $i$, then  $\Theta(\mathbb Z_n)$ has exactly two vertices of degree $n-1$, and the remaining vertices will each have degree $2$.
We illustrate  $\Theta(\mathbb Z_n)$ for $n=8$ below.
The graph $\Theta(\mathbb Z_8)$ can be suitably drawn as the following:
\begin{figure}[H]
	\centering
	\begin{tikzpicture}
   
    \node[shape=circle,draw=black] (0) at (0,0) {$0$};  
    \node[shape=circle,draw=black] (4) at (6,0)  {$4$};   
    \node[shape=circle,draw=black] (1) at  (3,-1) {$1$};  
    \node[shape=circle,draw=black] (2) at  (3,-2) {$2$};  
    \node[shape=circle,draw=black] (3) at (3,-3) {$3$}; 
    \node[shape=circle,draw=black] (5) at (3,1) {$5$}; 
    \node[shape=circle,draw=black] (6) at (3,2) {$6$}; 
    \node[shape=circle,draw=black] (7) at (3,3) {$7$};  
    \draw (0) -- (1);  
    \draw (0) -- (2);  
    \draw (0) -- (3);  
    \draw (0) -- (4); 
    \draw (0) -- (5);  
    \draw (0) -- (6);  
    \draw (0) -- (7);  
    \draw (4) -- (1);  
    \draw (4) -- (2);  
    \draw (4) -- (3);  
    \draw (4) -- (5);  
    \draw (4) -- (6);  
    \draw (4) -- (7);
    
    \end{tikzpicture}
    \caption{$\Theta(\mathbb Z_8)$}
    \label{Fig1}
\end{figure}
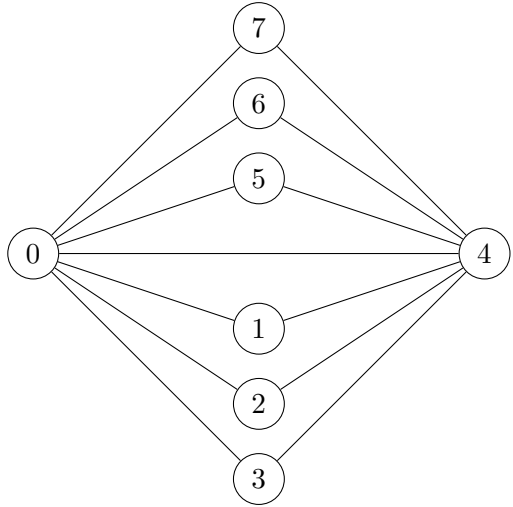
From Figure \ref{Fig1} it is evident that  $\Theta(\mathbb Z_8)$ is planar.
Using the same arguments as done for  $\Theta(\mathbb Z_8)$, it can be established that    $\Theta(\mathbb Z_n)$ is planar for $n=2^i$ where $i\in \mathbb{N}$.

Now we show that $\Theta(\mathbb Z_n)$ is planar for $n=3^i$ only for $i=1$.
Note that when $i=1$, then $\Theta(\mathbb Z_3)\cong K_3$ which is planar.  
We illustrate $\Theta(\mathbb Z_9)$ below:
\begin{figure}[H]
	\centering
	\begin{tikzpicture}
	\node[shape=circle,draw=black] (0) at (0,0) {$0$};  
	\node[shape=circle,draw=black] (3) at (-5,-1.5)  {$3$};   
	\node[shape=circle,draw=black] (6) at  (0,-4) {$6$};  
	\node[shape=circle,draw=black] (1) at  (3,0) {$1$};  
	\node[shape=circle,draw=black] (2) at (3,-2) {$2$}; 
	\node[shape=circle,draw=black] (5) at (3,-4) {$5$}; 
	\node[shape=circle,draw=black] (4) at (-3,2) {$4$}; 
	\node[shape=circle,draw=black] (7) at (-4,-3) {$7$};  
	\node[shape=circle,draw=black] (8) at (3,2) {$8$};  
	\draw (0) -- (1);  
	\draw (0) -- (2);  
	\draw (0) -- (3);  
	\draw (0) -- (4); 
	\draw (0) -- (5);  
	\draw (0) -- (6);  
	\draw (0) -- (7); 
	\draw (0) -- (8);  
	
	\draw (3) -- (1);  
	\draw (3) -- (2);  
	\draw (3) -- (4); 
	\draw (3) -- (5);  
	\draw (3) -- (6);  
	\draw (3) -- (7); 
	\draw (3) -- (8);
	
	\draw (6) -- (1);  
	\draw (6) -- (2);  
	\draw (6) -- (4); 
	\draw (6) -- (5);  
	 
	\draw (6) -- (7); 
	\draw (6) -- (8);
	\end{tikzpicture}
	\caption{$\Theta(\mathbb Z_9)$}
	\label{Fig2}

\end{figure}
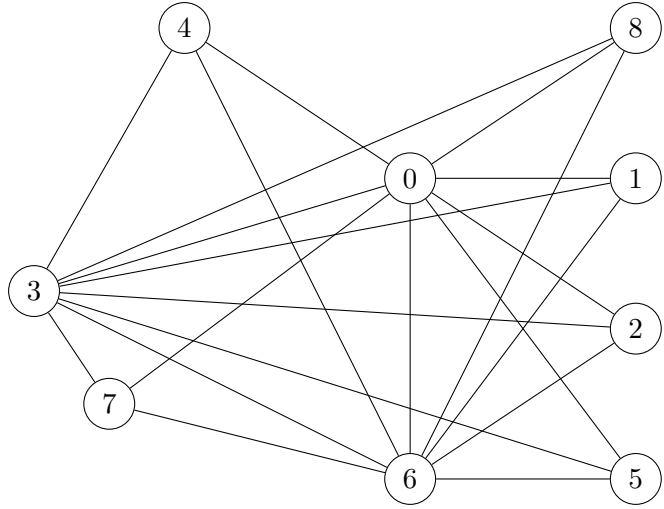

Now for $i\ge 2$, if we take the vertices $0,3^{i-1},2\cdot 3^{i-1}$, and any three vertices other than these, then the graph obtained contains $K_{3,3}$ as a subgraph. Using Theorem \ref{Kura}, we can conclude that $\Theta(\mathbb Z_n)$ is not planar for $n=3^i$ where $i\ge 2$. Hence, the graph is planar only when $n=3$.
Thus, $\Theta(\mathbb Z_{n})$ is planar if and only if  $n=3$ or $n=2^i$ where $i\in \mathbb{N}$.
\end{proof}
\begin{theorem}
If $p$ and $q$ are distinct primes with $p<q$ then  $\Theta(\mathbb Z_{pq})$ is Hamiltonian if and only if $p=2$.
 
\end{theorem}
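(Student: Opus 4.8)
The plan is to begin by pinning down the exact structure of $\Theta(\mathbb{Z}_{pq})$. In $\mathbb{Z}_{pq}$ the only possible element orders are $1,p,q,pq$: the identity $0$ has order $1$, there are $\varphi(p)=p-1$ elements of order $p$ and $\varphi(q)=q-1$ of order $q$, and the remaining $\varphi(pq)=(p-1)(q-1)$ elements are generators, of order $pq$. Checking the adjacency rule, the $\gcd$ of any two of $1,p,q$ is $1$ or a prime, and $\gcd(pq,p)=p$, $\gcd(pq,q)=q$ are prime, while $\gcd(pq,pq)=pq$ is composite. Hence, writing $W$ for the set consisting of $0$ together with all elements of order $p$ or order $q$, and $S$ for the set of generators, I would record: $W$ induces a clique $K_{p+q-1}$, $S$ is an independent set of size $(p-1)(q-1)$, and every vertex of $S$ is adjacent to every vertex of $W$ (indeed every vertex of $W$ is adjacent to every other vertex of $\Theta(\mathbb{Z}_{pq})$). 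Thus $\Theta(\mathbb{Z}_{pq})$ is the join of $K_{p+q-1}$ with $\overline{K_{(p-1)(q-1)}}$.

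For the ``only if'' direction I would argue by contraposition using toughness. Suppose $p\neq 2$, so $p\geq 3$ and, since $q>p$ is prime, $q\geq 5$. Deleting the $|W|=p+q-1$ vertices of $W$ from $\Theta(\mathbb{Z}_{pq})$ leaves the edgeless graph on $S$, which therefore has $(p-1)(q-1)\geq 2\cdot 4>1$ components, so $W$ is a genuine disconnecting set. If $\Theta(\mathbb{Z}_{pq})$ were Hamiltonian, then by Theorem \ref{Thtough} it would be $1$-tough, forcing the number of components $(p-1)(q-1)$ to be at most $|W|=p+q-1$. But $(p-1)(q-1)\leq p+q-1$ is equivalent to $(p-2)(q-2)\leq 2$, which fails since $p-2\geq 1$ and $q-2\geq 3$. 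Hence $\Theta(\mathbb{Z}_{pq})$ is not Hamiltonian when $p\geq 3$.

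For the ``if'' direction, assume $p=2$. Then $\Theta(\mathbb{Z}_{2q})$ has $2q\geq 6\geq 3$ vertices; each vertex of $W$ has degree $2q-1$ (it is adjacent to everything) and each generator has degree $|W|=q+1$. The only non-adjacent pairs are pairs of generators $v,w$, and for these $\deg(v)+\deg(w)=2(q+1)=2q+2\geq 2q$. Ore's condition (Theorem \ref{Ore}) is therefore satisfied, so $\Theta(\mathbb{Z}_{2q})$ is Hamiltonian. Combining the two directions gives the claim.

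I expect the only delicate point to be the structural description — in particular verifying carefully that $W$ is a clique, that the generators form an independent set of size exactly $(p-1)(q-1)$, and that the complete join between them holds — since both the toughness count in the first direction and the degree computation in the second rest entirely on it; after that, each implication reduces to a one-line inequality. One could alternatively replace the Ore argument in the $p=2$ case by an explicit Hamiltonian cycle alternating generators with vertices of $W$ (possible precisely because $|W|=q+1\geq q-1=|S|$), but invoking Theorem \ref{Ore} is shorter.
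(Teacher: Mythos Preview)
Your proof is correct and follows essentially the same approach as the paper: both directions coincide with the paper's argument, using Ore's theorem for $p=2$ and the failure of $1$-toughness (via deletion of the non-generators, yielding $(p-1)(q-1)$ isolated generators) for $p\geq 3$. Your explicit identification of $\Theta(\mathbb{Z}_{pq})$ as the join $K_{p+q-1}\vee\overline{K_{(p-1)(q-1)}}$ is a clean way to set up both inequalities, but the underlying computations and the key inequality $(p-2)(q-2)>2$ are the same as in the paper.
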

\begin{proof}
Assume that $p=2$.
Let $v_1$ and $v_2$ be two non-adjacent vertices in  $\Theta(\mathbb Z_{2q})$. 
Then, $v_1$ and $v_2$ are generators of $\mathbb Z_{2q}$.
Note that $v_1$ is adjacent to any non-generator of $\mathbb Z_{2q}$ and so is $v_2$.
Then, $\deg(v_i)=2q-\varphi(2q)=2q-(q-1)=q+1$ where $i\in \{1,2\}$.
Thus, $\deg(v_1)+\deg(v_2)= 2(q+1)>2q$.
Thus, the sum of degrees of two non-adjacent vertices is greater than the number of vertices in $\Theta(\mathbb Z_{2q})$.
By Theorem \ref{Ore}, we conclude that $\Theta(\mathbb Z_{2q})$ is Hamiltonian.

Now we show that if $2<p<q$ then the graph $\Theta(\mathbb Z_{2q})$ is not Hamiltonian.
Consider the sets $$A=\{i:\gcd(i,n)=1\} \text{ and }B=\{0\}\cup \{i:\gcd(i,n)\neq 1\}.$$
If we remove all vertices of $\Theta(\mathbb Z_{2q})$ which are in $B$ then  $\Theta(\mathbb Z_{2q})$ has $|A|=\varphi(pq)$ components.
Since $2<p<q$, we obtain,
\begin{equation}\label{tough}
\begin{split}
(p-2)(q-2)> 2
&\implies  pq-2p-2q+2> 0\\
&\implies  pq+2> 2(p+q)\\
&\implies  pq-p-q+1>p+q-1 \\
&\implies  (p-1)(q-1)>p+q-1\\
&\implies  \varphi(pq)>pq-\varphi(pq)\\
&\implies  |A|>|B|.
\end{split}
\end{equation}

Using  Equation (\ref{tough}) we find that $\Theta(\mathbb Z_{pq})$ is not $1$-tough.
Using Theorem \ref{Thtough} we conclude that $\Theta(\mathbb Z_{pq})$ is not Hamiltonian when $2<p<q$.
Thus,  $\Theta(\mathbb Z_{pq})$ is Hamiltonian if and only if $p=2$.
 
\end{proof}

We observe that if two groups $G_1$ and $G_2$ are isomorphic, then the corresponding graphs $\Theta(G_1)$ and $\Theta(G_2)$ are isomorphic to each other.
However, the converse is false. To illustrate it we consider the following example:

\begin{example}
Consider the unitriangular matrix group $\mathfrak F=\biggl\{\left(
\begin{array}{cccccccc}
1& a&b\\0&1&c\\0&0&1
\end{array}
\right): a, b, c\in \mathbb F_3\biggr \}$ where $\mathbb F_3$ denotes the finite field of order $3$.
Clearly $\mathfrak F$ forms a group under matrix multiplication.
Now consider the group $(\mathbb Z_3)^3$. Using Corollary \ref{product}, $\Theta((\mathbb Z_3)^3)$ is complete.
We also notice that each non-identity element of the group $\mathfrak F$ has order $3$.
Thus, any two elements of $\Theta(\mathfrak{F})$ are adjacent to each other which in turn implies that $\Theta(\mathfrak F)$  is complete.
Since $\mathfrak{F}$ is non-commutative whereas $(\mathbb Z_3)^3$ is commutative, we find that the two groups  are not isomorphic to each other.
However, $\Theta(\mathfrak{F})$ and $\Theta((\mathbb Z_3)^3)$ are isomorphic to each other as both are complete graphs having $27$ elements.
	
\end{example}

\section{Vertex Connectivity of $\Theta(G)$ }
\label{S4}
In this section, we first investigate the vertex connectivity of $\Theta(\mathbb Z_n)$ for $n\ge 2$. For a given group $G$, we fix the following notations:
Let $S^{*}(G)$ denote  set of all those elements  $G$ which have prime order.
Let $S(G)=\{e\}\cup S^{*}(G)$ where $e$ denotes the identity element of $G$.

\begin{proposition}\label{Th10}
If $n$ is  prime, then $\kappa(\Theta(\mathbb Z_n))=n-1.$
\end{proposition}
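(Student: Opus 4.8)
The plan is to reduce the statement immediately to a standard fact about complete graphs. By Corollary \ref{Th3}, when $n$ is prime the graph $\Theta(\mathbb Z_n)$ is complete, i.e. $\Theta(\mathbb Z_n)\cong K_n$. So the entire content of the proposition is the computation $\kappa(K_n)=n-1$, and there is essentially no obstacle here beyond citing the right earlier result.

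First I would invoke Corollary \ref{Th3} to write $\Theta(\mathbb Z_n)\cong K_n$. For the upper bound $\kappa(\Theta(\mathbb Z_n))\le n-1$, I would note that every vertex of $K_n$ has degree $n-1$, so $\delta(\Theta(\mathbb Z_n))=n-1$, and Theorem \ref{Th11} gives $\kappa(\Theta(\mathbb Z_n))\le\delta(\Theta(\mathbb Z_n))=n-1$. (Equivalently, deleting all $n-1$ vertices other than a fixed one leaves the trivial graph, which by definition witnesses $\kappa\le n-1$.)

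For the matching lower bound, I would argue directly from the definition of vertex connectivity: if one removes any set $S$ of at most $n-2$ vertices from $K_n$, then at least two vertices remain, and since $K_n$ is complete every pair of remaining vertices is still joined by an edge, so the resulting graph is complete on $\ge 2$ vertices, hence connected and non-trivial. Thus no vertex cut of size $\le n-2$ exists, giving $\kappa(\Theta(\mathbb Z_n))\ge n-1$. Combining the two bounds yields $\kappa(\Theta(\mathbb Z_n))=n-1$, completing the proof.

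I expect no real difficulty: the only point to be careful about is matching the paper's convention that the connectivity of a trivial graph is $0$ (stated in Section \ref{S2}), which is exactly why the value is $n-1$ rather than undefined when the only way to "disconnect" $K_n$ is to reduce it to a single vertex.
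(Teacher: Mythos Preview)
Your proposal is correct and follows essentially the same approach as the paper: invoke Corollary~\ref{Th3} to identify $\Theta(\mathbb Z_n)$ with $K_n$ and then use that $\kappa(K_n)=n-1$. The only difference is that the paper simply cites the latter as a standard fact, whereas you spell out the two inequalities via Theorem~\ref{Th11} and a direct argument.
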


\begin{proof}
Since $n$ is prime,  $\Theta(\mathbb Z_n)$ is complete(Corollary \ref{Th3}).
Since  vertex connectivity of a complete graph on $n$ vertices is $n-1$, we conclude that $\kappa(\Theta(\mathbb Z_n))=n-1.$
\end{proof}

\begin{theorem}\label{Th12}
If $n$ is composite, then $\kappa(\Theta(\mathbb Z_n))= |S(\mathbb Z_n)|.$
\end{theorem}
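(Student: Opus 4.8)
The plan is to identify $S(\mathbb Z_n)$ with the set of \emph{universal} vertices of $\Theta(\mathbb Z_n)$ (those adjacent to every other vertex), and then to sandwich $\kappa(\Theta(\mathbb Z_n))$ between $|S(\mathbb Z_n)|$ from below and from above.

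First I would record the dictionary: a vertex $x$ is adjacent to every other vertex of $\Theta(\mathbb Z_n)$ if and only if $x\in S(\mathbb Z_n)$. Indeed, if $o(x)$ is $1$ or a prime, then $\gcd(o(x),o(y))$ divides $o(x)$ and so is $1$ or a prime, so $x$ is universal; conversely, if $o(x)$ is composite then, exactly as in the proof of Theorem \ref{Theorem1}, $x\neq -x$ in $\mathbb Z_n$ and $\gcd(o(x),o(-x))=o(x)$ is composite, so $x$ is not universal. Since $n$ is composite, every generator $g$ of $\mathbb Z_n$ has $o(g)=n$ composite, hence $g\notin S(\mathbb Z_n)$.

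For the upper bound I would compute $\deg(g)$ for such a generator. For each $y\in\mathbb Z_n$ we have $o(y)\mid n$, so $\gcd(o(g),o(y))=\gcd(n,o(y))=o(y)$, which is $1$ or a prime precisely when $y\in S(\mathbb Z_n)$. Thus the neighbourhood of $g$ is exactly $S(\mathbb Z_n)$, giving $\deg(g)=|S(\mathbb Z_n)|$, and Theorem \ref{Th11} yields $\kappa(\Theta(\mathbb Z_n))\le\delta(\Theta(\mathbb Z_n))\le\deg(g)=|S(\mathbb Z_n)|$. (Equivalently, deleting $S(\mathbb Z_n)$ leaves only composite-order elements, among which the $\varphi(n)\ge 2$ generators are mutually non-adjacent and non-adjacent to everything left, so $S(\mathbb Z_n)$ is itself a vertex cut.)

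For the lower bound, take any $C\subseteq\mathbb Z_n$ with $|C|<|S(\mathbb Z_n)|$; I must show $\Theta(\mathbb Z_n)-C$ is connected and non-trivial. Because $n$ is composite, $\varphi(n)\ge 2$, so $|S(\mathbb Z_n)|\le n-\varphi(n)\le n-2$, whence $\Theta(\mathbb Z_n)-C$ has more than $2$ vertices. Also $C$ cannot contain all of $S(\mathbb Z_n)$, so some universal vertex $u$ survives, and being adjacent to every remaining vertex it keeps $\Theta(\mathbb Z_n)-C$ connected. Hence $\kappa(\Theta(\mathbb Z_n))\ge|S(\mathbb Z_n)|$, and combining the two bounds finishes the proof. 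The only step requiring an actual idea rather than gcd-bookkeeping is recognizing that a generator's degree is exactly $|S(\mathbb Z_n)|$, which drives both inequalities at once; the mild nuisance is the non-triviality clause in the definition of $\kappa$, handled by the estimate $|S(\mathbb Z_n)|\le n-2$.
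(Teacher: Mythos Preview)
Your proof is correct and follows essentially the same approach as the paper: both compute the degree of a generator as $|S(\mathbb Z_n)|$ to get the upper bound via Theorem~\ref{Th11}, and both obtain the lower bound by observing that any deletion of fewer than $|S(\mathbb Z_n)|$ vertices leaves behind some element of $S(\mathbb Z_n)$, which is universal and hence keeps the graph connected. Your version is slightly more careful in two places---you explicitly verify that $S(\mathbb Z_n)$ is exactly the set of universal vertices, and you address the non-triviality clause in the definition of $\kappa$ via $|S(\mathbb Z_n)|\le n-\varphi(n)\le n-2$---but these are refinements of the same argument rather than a different route.
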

\begin{proof}
Let  $v_0$ be a generator of $\mathbb Z_n$. Thus, $o(v_0)=n$.
Since $v_0$ is a generator so $\deg(v_0)\le \deg (w)$ for all vertices $w\in \Theta(\mathbb Z_n)$ which implies that $\delta(\Theta(\mathbb Z_n))=\deg(v_0)$.
Now we notice that the vertex $v_0\in \Theta(\mathbb Z_n)$ is adjacent only to all the elements of $S(\mathbb Z_n)$ and nothing else.
Thus, $\deg(v_0)=|S(\mathbb Z_n)|$.
By Theorem \ref{Th11}, $\kappa(\Theta(\mathbb Z_n))\le |S(\mathbb Z_n)|.$
Now we claim  that  $S(\mathbb{Z}_n)$ is a minimum separating set of $\Theta(\mathbb Z_n)$. If not, then suppose we remove $|S(\mathbb{Z}_n)|-1$ elements from the vertex set of $\Theta(\mathbb Z_n)$. Then, there exists $a\in S(\mathbb{Z}_n)$ such that $a$ is adjacent to all other vertices of $\Theta(\mathbb Z_n)$, making $\Theta(\mathbb Z_n)$  connected.
Thus, $S(\mathbb{Z}_n)$ is a minimum separating set of $\Theta(\mathbb Z_n)$, which proves the fact that $\kappa(\Theta(\mathbb Z_n))= |S(\mathbb Z_n)|.$
\end{proof}

\begin{corollary}\label{VC}
If $n=pq$ where $p,q$ are distinct primes with $p<q$, then $\kappa(\Theta(\mathbb Z_n))=p+q-1$.
\end{corollary}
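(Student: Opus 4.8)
The plan is to invoke Theorem \ref{Th12} directly: since $n=pq$ with $p\neq q$ is composite, we have $\kappa(\Theta(\mathbb Z_n))=|S(\mathbb Z_n)|$, so the entire task reduces to a counting of $|S(\mathbb Z_{pq})|=1+|S^{*}(\mathbb Z_{pq})|$, i.e.\ counting the elements of $\mathbb Z_{pq}$ of prime order and adding $1$ for the identity.

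First I would recall that in a cyclic group of order $n$ the order of every element is a divisor of $n$, and for each divisor $d\mid n$ there are exactly $\varphi(d)$ elements of order $d$. Here the divisors of $pq$ are precisely $1,p,q,pq$, so the only prime orders occurring are $p$ and $q$. Hence $|S^{*}(\mathbb Z_{pq})|=\varphi(p)+\varphi(q)=(p-1)+(q-1)=p+q-2$, and therefore $|S(\mathbb Z_{pq})|=1+(p+q-2)=p+q-1$. Combining this with Theorem \ref{Th12} gives $\kappa(\Theta(\mathbb Z_{pq}))=p+q-1$, as claimed.

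There is essentially no obstacle here beyond this elementary bookkeeping: the content is entirely carried by Theorem \ref{Th12}, and the corollary is just the special case $n=pq$ with the count of prime-order elements made explicit. The only point worth stating carefully is that $p,q$ are distinct, which guarantees $\varphi(p)$ and $\varphi(q)$ count disjoint families of elements (no element can have order both $p$ and $q$), so the two contributions simply add.
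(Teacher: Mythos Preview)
Your proof is correct and follows essentially the same approach as the paper: both invoke Theorem \ref{Th12} and then compute $|S(\mathbb Z_{pq})|=p+q-1$. The only cosmetic difference is that the paper lists $S(\mathbb Z_{pq})$ explicitly as $\{0,p,2p,\ldots,(q-1)p,q,2q,\ldots,(p-1)q\}$ and counts, whereas you count via $\varphi(p)+\varphi(q)+1$; these are the same computation.
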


\begin{proof}
If $n=pq$, then $S(\mathbb Z_{pq})=\{0,p,2p,3p,\ldots,(q-1)p,q,2q,3q,\ldots ,(p-1)q\}.$
Since  $|S(\mathbb Z_{pq})|=p+q-1,$ the result follows.
\end{proof}

\begin{corollary}
If $n=p^m$ where $p$ is a  prime  and $m\in \mathbb N$ then $\kappa(\Theta(\mathbb Z_n))=p$.
\end{corollary}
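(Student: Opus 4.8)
The plan is to reduce the statement to a count of $|S(\mathbb Z_n)|$ and then invoke Theorem \ref{Th12}. For $m\ge 2$ the integer $n=p^m$ is composite, so Theorem \ref{Th12} applies directly and gives $\kappa(\Theta(\mathbb Z_n))=|S(\mathbb Z_n)|$; it therefore suffices to determine the cardinality of $S(\mathbb Z_n)=\{0\}\cup S^{*}(\mathbb Z_n)$.

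Next I would identify $S^{*}(\mathbb Z_n)$ explicitly. Since the only prime dividing $n=p^m$ is $p$, an element of $\mathbb Z_{p^m}$ has prime order if and only if it has order exactly $p$. In the cyclic group $\mathbb Z_{p^m}$ there is a unique subgroup of order $p$, namely $\langle p^{m-1}\rangle=\{0,p^{m-1},2p^{m-1},\dots,(p-1)p^{m-1}\}$, and every element of order $p$ lies in it; hence $S^{*}(\mathbb Z_n)=\{\,kp^{m-1}:1\le k\le p-1\,\}$, which has $\varphi(p)=p-1$ elements. Adjoining the identity $0$ gives $|S(\mathbb Z_n)|=p$, and combining this with Theorem \ref{Th12} finishes the argument for $m\ge 2$.

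Finally I would address the boundary case $m=1$: here $n=p$ is prime, Theorem \ref{Th12} no longer applies, and Proposition \ref{Th10} gives $\kappa(\Theta(\mathbb Z_p))=p-1$. So the clean formula $\kappa=p$ is the one valid for all composite $n=p^m$, i.e.\ for $m\ge 2$; if one wants the statement literally for every $m\in\mathbb N$ one should either restrict to $m\ge 2$ or record the exceptional value $p-1$ at $m=1$. I do not anticipate a genuine obstacle in this proof: everything is a direct application of the already-established Theorem \ref{Th12} together with the elementary subgroup structure of $\mathbb Z_{p^m}$, and the only point requiring care is keeping this degenerate case straight.
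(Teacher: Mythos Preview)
Your proof is correct and follows essentially the same approach as the paper: both invoke Theorem~\ref{Th12} and then explicitly list $S(\mathbb Z_{p^m})=\{0,p^{m-1},2p^{m-1},\dots,(p-1)p^{m-1}\}$ to conclude $|S(\mathbb Z_{p^m})|=p$. Your observation about the boundary case $m=1$ is also correct: the paper silently relies on Theorem~\ref{Th12}, which requires $n$ to be composite, so the stated corollary should in fact be read with $m\ge 2$ (for $m=1$ Proposition~\ref{Th10} gives $\kappa=p-1$).
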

\begin{proof}

If $n=p^m$, then $S(\mathbb Z_{p^m})=\{0,p^{m-1},2p^{m-1},3p^{m-1},\ldots,(p-1)p^{m-1}\}.$
Since  $|S(\mathbb Z_{p^m})|=p,$ the result follows.
\end{proof}

Now it is quite natural to ask that if $\Theta(G)$ is not complete, is it true that $\kappa(\Theta(G))$ equals $|S(G)|$? We show that it is false.
Consider the Dicyclic group $\mbox{Dic}_n$ of order $4n$ given by :
\begin{flalign*}
\mbox{Dic}_n=\{\langle a,x\rangle:a^{2n}=1, x^2=a^n, ax=xa^{-1}\}.
\end{flalign*}
We illustrate $\Theta(\mbox{Dic}_3)$ below:
\begin{figure}[H]
	\centering
	\begin{tikzpicture}
		\node[shape=circle,draw=black] (1) at (-5,2) {$1$};  
	\node[shape=circle,draw=black] (a) at (-3,3)  {$a$};   
	\node[shape=circle,draw=black] (a2) at  (-6,1) {$a^2$};  
	\node[shape=circle,draw=black] (a3) at  (-5,-4) {$a^3$};  
	\node[shape=circle,draw=black] (a4) at (-6,-1) {$a^4$}; 
	\node[shape=circle,draw=black] (a5) at (-3,-5) {$a^5$};

	\node[shape=circle,draw=black] (x) at (4,2) {$x$};  
	\node[shape=circle,draw=black] (xa) at (3,3)  {$xa$};   
	\node[shape=circle,draw=black] (xa2) at  (3,0) {$xa^2$};  
	\node[shape=circle,draw=black] (xa3) at  (4,-2) {$xa^3$};  
	\node[shape=circle,draw=black] (xa4) at (3.5,-3.5) {$xa^4$}; 
	\node[shape=circle,draw=black] (xa5) at (3,-5) {$xa^5$};

	\draw (1) -- (a);  
	\draw (1) -- (a2);  
	\draw (1) -- (a3); 
	\draw (1) -- (a4);  
	\draw (1) -- (a5);  
	\draw (1) -- (x); 
	\draw (1) -- (xa);  
	\draw (1) -- (xa2);  
	\draw (1) -- (xa3);  
	\draw (1) -- (xa4);  
	\draw (1) -- (xa5);

	\draw (a2) -- (a);    
	\draw (a2) -- (a3); 
	\draw (a2) -- (a4);  
	\draw (a2) -- (a5);  
	\draw (a2) -- (x); 
	\draw (a2) -- (xa);  
	\draw (a2) -- (xa2);  
	\draw (a2) -- (xa3);  
	\draw (a2) -- (xa4);  
	\draw (a2) -- (xa5);

	\draw (a3) -- (a);    
	\draw (a3) -- (a4);  
	\draw (a3) -- (a5);  
	\draw (a3) -- (x); 
	\draw (a3) -- (xa);  
	\draw (a3) -- (xa2);  
	\draw (a3) -- (xa3);  
	\draw (a3) -- (xa4);  
	\draw (a3) -- (xa5);

	\draw (a4) -- (a);    
	\draw (a4) -- (a3); 
	\draw (a4) -- (a4);  
	\draw (a4) -- (a5);  
	\draw (a4) -- (x); 
	\draw (a4) -- (xa);  
	\draw (a4) -- (xa2);  
	\draw (a4) -- (xa3);  
	\draw (a4) -- (xa4);  
	\draw (a4) -- (xa5);
	
	\draw (a) -- (x); 
	\draw (a) -- (xa);  
	\draw (a) -- (xa2);  
	\draw (a) -- (xa3);  
	\draw (a) -- (xa4);  
	\draw (a) -- (xa5);
	
	\draw (a5) -- (x); 
	\draw (a5) -- (xa);  
	\draw (a5) -- (xa2);  
	\draw (a5) -- (xa3);  
	\draw (a5) -- (xa4);  
	\draw (a5) -- (xa5);

	\end{tikzpicture}
	\caption{$\Theta(\mbox {Dic}_3)$}
	\label{Fig3}

\end{figure}
From Figure \ref{Fig3}, we observe that  $\{1,a,a^2,a^3,a^4,a^5\}$ is a minimum separating set of $\Theta(\mbox {Dic}_3)$.
So, $\kappa(\Theta(\mbox {Dic}_3))=6$.
Also, $S(\mbox{Dic}_n)=\{1,a^2,a^3,a^4\}$.
Hence, $\kappa(\Theta(\mbox {Dic}_3))>|S(\mbox{Dic}_3)|$.

We thus end this section by proposing  the following open problem which can be considered for further research.
\begin{problem}
Characterize all finite groups $G$, such that $\Theta(G)$ is not complete, but $\kappa(\Theta(G))=|S(G)|$.
\end{problem}

\section{Signless Laplacian Spectrum of $\Theta(G)$}
\label{S5}
In this section, we shall find the signless Laplacian spectra of $\Theta(\mathbb{Z}_n)$  and $\Theta(\mbox D_n)$ for $n\in \{pq, p^m\}$ where $p,q$ are distinct primes with $p<q$ and $m\in \mathbb{N}$.
We denote the signless Laplacian matrix of $\Theta(G)$  by $Q=Q(\Theta(G))$.

\subsection{Signless Laplacian Spectrum of $\Theta(\mathbb{Z}_n)$}

\begin{theorem}\label{Spec1}
If $n=pq$, then the eigenvalues of $Q(\Theta(\mathbb{Z}_n))$ are $p+q-1$ with multiplicity $pq-p-q$, $pq-2$ with multiplicity $p+q-2$ and other two are solutions of the equation $x^2-x(pq+2p+2q-4)+2(p+q-1)(p+q-2)=0$. 
\end{theorem}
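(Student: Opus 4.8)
The plan is to first pin down the graph $\Theta(\mathbb{Z}_{pq})$ exactly, and then read the spectrum of $Q$ off an equitable partition. Write $n=pq$ and sort the elements of $\mathbb{Z}_n$ by order: the identity $0$ has order $1$; there are $p-1$ elements of order $p$ (the nonzero multiples of $q$), $q-1$ elements of order $q$ (the nonzero multiples of $p$), and $(p-1)(q-1)$ generators, each of order $pq$. Computing $\gcd$ of orders pairwise, one checks that the set $S$ consisting of $0$ together with all elements of order $p$ or $q$ is a clique of size $s:=p+q-1$ (the relevant gcds are $1$, $p$ or $q$); that the set $D$ of generators, of size $d:=(p-1)(q-1)$, is an independent set, since $\gcd(pq,pq)=pq$ is composite; and that every vertex of $S$ is adjacent to every vertex of $D$ (again the gcds are $1$, $p$ or $q$). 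Hence $\Theta(\mathbb{Z}_{pq})$ is the join $K_s\vee\overline{K_d}$, and $s+d=pq=n$.

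Next I would write $Q=Q(\Theta(\mathbb{Z}_{pq}))$ in block form with respect to the ordered partition $(S,D)$. Since every vertex of $S$ has degree $n-1$ and every vertex of $D$ has degree $s$,
\[
Q=\begin{pmatrix}(s+d-2)I_s+J_s & J_{s\times d}\\ J_{d\times s} & sI_d\end{pmatrix}.
\]
The partition $(S,D)$ is equitable, so the eigenvectors split into two kinds. For $u\in\mathbb{R}^s$ with $\mathbf{1}_s^{\top}u=0$ the vector $(u,0)$ is an eigenvector with eigenvalue $s+d-2=pq-2$, because $J_su=0$ and $J_{d\times s}u=0$; this contributes multiplicity $s-1=p+q-2$. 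For $w\in\mathbb{R}^d$ with $\mathbf{1}_d^{\top}w=0$ the vector $(0,w)$ is an eigenvector with eigenvalue $s=p+q-1$, since $J_{s\times d}w=0$; this contributes multiplicity $d-1=pq-p-q$. One may also invoke Theorem \ref{Useit} to handle the $aI+bJ$ block, but the direct verification is just as quick.

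The two remaining eigenvalues are those of the quotient matrix of $Q$ on $\operatorname{span}\{(\mathbf{1}_s,0),(0,\mathbf{1}_d)\}$, namely
\[
B=\begin{pmatrix}2s+d-2 & d\\ s & s\end{pmatrix}.
\]
A short computation gives $\operatorname{tr}B=3s+d-2$ and $\det B=s(2s+d-2)-ds=2s(s-1)$; substituting $s=p+q-1$ and $d=(p-1)(q-1)$ turns these into $pq+2p+2q-4$ and $2(p+q-1)(p+q-2)$ respectively, so the last two eigenvalues are precisely the roots of $x^2-(pq+2p+2q-4)x+2(p+q-1)(p+q-2)=0$. As a sanity check the multiplicities total $(p+q-2)+(pq-p-q)+2=pq=n$, so nothing has been missed. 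I do not expect a genuine obstacle here: the only place demanding care is the adjacency bookkeeping that identifies $\Theta(\mathbb{Z}_{pq})$ with $K_s\vee\overline{K_d}$ together with the observation that the two-part partition is equitable; once that is in place, the rest is a routine eigenvalue computation.
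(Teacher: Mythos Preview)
Your proof is correct and follows essentially the same approach as the paper. Both identify the two-block structure induced by the partition into $S=\{0\}\cup\{\text{prime-order elements}\}$ and $D=\{\text{generators}\}$, write $Q$ in the corresponding block form, extract the eigenvalues $pq-2$ and $p+q-1$ from the blocks, and obtain the remaining two eigenvalues from the $2\times 2$ equitable quotient matrix; your explicit identification of the graph as the join $K_s\vee\overline{K_d}$ and your use of eigenvectors orthogonal to $\mathbf{1}$ are slightly cleaner than the paper's ``identical rows'' argument, but the substance is the same.
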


\begin{proof}
The rows and columns of $Q=Q(\Theta(\mathbb{Z}_n))$ have been indexed in the following way:

We start with the zero element $0$ of $\mathbb Z_n$.
We then list those elements $m\in \mathbb Z_n$ such that $\gcd(m,n)\neq 1$.
Finally, we list those elements $m\in \mathbb Z_{n}$ such that $\gcd(m,n)= 1$.
Using the above indexing, $Q$  takes the following form,
\begin{equation}\label{Equation 1}
\begin{split}
Q&=\left(
\begin{array}{cccccccc}
((n-2)I+J)_{(n-\varphi(n))\times (n-\varphi(n))}&&&& J_{(n-\varphi(n))\times \varphi(n)}\\
\\
J^T_{\varphi(n)\times (n-\varphi(n))}&&&& (n-\varphi(n))I_{\varphi(n)\times \varphi(n)}
\end{array}
\right).
\end{split}
\end{equation}
Here, $J_{m\times n}$ is a matrix of order $m\times n$  all of whose entries are $1$.
\\
If we consider the matrix $Q-(n-2)I$ we obtain,
\begin{equation}\label{mat1}
Q-(n-2)I=\left(
\begin{array}{ccccccccccc}
J_{n-\varphi(n)} & & J_{(n-\varphi(n))\times \varphi(n)} 
\\
\\
J^T_{\varphi(n)\times (n-\varphi(n))}  & & (2-\varphi(n))I_{\varphi(n)}
\end{array}
\right).
\end{equation}

Since the matrix in Equation (\ref{mat1}) has $n-\varphi(n)$ identical rows, we conclude that $n-2$ is an eigenvalue of $Q$ with multiplicity at least $n-\varphi(n)-1$.
Similarly, if we consider the matrix $Q-(n-\varphi(n))I$ we find that it has $\varphi(n)$ identical rows, which makes us conclude that $n-\varphi(n)$ is an eigenvalue of $Q$ with multiplicity at least $\varphi(n)-1$.
We shall use the concept of  \textit{equitable partitions}(see \cite[Section $5$]{banerjee2020signless}) to find the remaining two eigenvalues of $Q$.
In short, given a graph $\mathcal{G}$, a partition $\pi $ of $V(\mathcal{G})=V_1\cup V_2\cup \cdots \cup V_k$ is an equitable partition of $\mathcal{G}$, if every vertex in $V_i$ has the same number of neighbors in $V_j$ for all $i,j\in \{1,2,\ldots ,k\}$.
Also, given an equitable partition $\pi$ of   $\mathcal{G}$, and its signless Laplacian matrix $Q$, we can form a matrix $Q_{\pi}=(q_{ij})$ in the following way
\begin{equation}
\label{EqP}
q_{ij}= 
\begin{cases} 
b_{ij} & \text{ if }i\neq j \\
b_{ii} +\sum _{j=1}^k b_{ij} & \text{ if } i=j\\ 
\end{cases}
\end{equation}
where $b_{ij}$ is the number of neighbors  a vertex $v\in V_i$ has in $V_j$, and $b_{ii}$ is the number of neighbors  a vertex $v\in V_i$ has in $V_i$.
We refer to the matrix $Q_{\pi}$ as the  matrix corresponding to the partition $\pi$ of  $\mathcal{G}$.
It is further  known  that the multiset of eigenvalues of $Q_{\pi}$ is contained in the multiset of eigenvalues of $Q$ \cite[ Lemma 5.1]{banerjee2020signless}.

We partition the vertex set $V$ of $\Theta(\mathbb{Z}_n)$ as $V_1\cup V_2$ where $V_1=\{0\}\cup \{m: \gcd(m,n)\neq 1\}$ and $V_2=V\setminus V_1$. 
We notice that each vertex $v$ in  $V_1$ has $n-\varphi(n)-1$ neighbors in $V_1$, and $\varphi(n)$ neighbors in $V_2$.
Also, each vertex $v$ in $V_2$ has $n-\varphi(n)$ neighbors in $V_1$, and $0$ neighbors in $V_2$.
We call this partition $\pi$.
Using Equation (\ref{EqP}), we can construct the \textit{equitable quotient} matrix $Q_{\pi}$ corresponding to this  partition $\pi$ of $\Theta(\mathbb{Z}_n)$.

\begin{equation*}
Q_{\pi}=\left(
\begin{array}{ccccc}
2n-\varphi(n)-2  && \varphi(n)
\\
\\
n-\varphi(n) && n-\varphi(n)
\end{array}
\right).
\end{equation*}

The characteristic polynomial of $Q_{\pi}$ is given by $$\Lambda(x)=x^2+x(2-3n+2\varphi(n))+(2n-2\varphi(n))(n-1-\varphi(n)).$$
The solutions of $\Lambda(x)=0$ are $\frac{1}{2}\{3n-2-2\varphi(n)\pm \sqrt{4n\varphi(n)-4\varphi(n)^2+n^2-4n+4}\}$.
Since $n>2$, so $\varphi(n)\ge 2$, hence we have
\begin{flalign*}
\Lambda(n-\phi(n))=\phi(n)(\phi(n)-n)\neq 0\\
\text{ and }\Lambda(n-2)=2(\phi(n)-1)(\phi(n)-n)\neq 0.
\end{flalign*}

Since the eigenvalues of $Q_{\pi}$ are different from $n-\varphi(n)$ and $n-2$, using   \cite[Lemma $5.1$]{banerjee2020signless} we find that the remaining eigenvalues of $Q$  are $\frac{1}{2}\{3n-2-2\varphi(n)\pm \sqrt{4n\varphi(n)-4\varphi(n)^2+n^2-4n+4}\}$.
Thus, the  eigenvalues of $Q$ are $n-\varphi(n)$ with multiplicity $\varphi(n)-1$, $n-2$ with multiplicity $n-\varphi(n)-1$, and other two are solutions of the equation $x^2+x(2-3n+2\varphi(n))+(2n-2\varphi(n))(n-1-\varphi(n))=0$.
On substituting $n=pq$, we find that the eigenvalues of $Q$ are $p+q-1$ with multiplicity $pq-p-q$, $pq-2$ with multiplicity $p+q-2$, and other two are solutions of the equation $x^2-x(pq+2p+2q-4)+2(p+q-1)(p+q-2)=0$, and hence the result follows.

\end{proof}

\begin{proposition}
	If $n=p$, then the eigenvalues of $Q(\Theta(\mathbb{Z}_n))$ are  $2(n-1)$ with multiplicity $1$ and $n-2$ with multiplicity $n-1$.
\end{proposition}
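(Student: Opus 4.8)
The plan is to reduce the statement to the known spectrum of a complete graph. First I would invoke Corollary \ref{Th3}: since $n=p$ is prime, the graph $\Theta(\mathbb{Z}_n)$ is complete, hence $\Theta(\mathbb{Z}_n)\cong K_n$. This is the only structural input needed; everything afterwards is a short matrix computation.

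Next I would write down $Q(\Theta(\mathbb{Z}_n))$ explicitly. In $K_n$ every vertex has degree $n-1$, so the degree matrix is $D=(n-1)I$, while the adjacency matrix is $A=J-I$ (all off-diagonal entries $1$, diagonal $0$), where $J$ is the all-ones matrix of order $n$. Therefore
\[
Q(\Theta(\mathbb{Z}_n))=D+A=(n-1)I+(J-I)=(n-2)I+J .
\]

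Finally I would apply Theorem \ref{Useit} with $a=n-2$ and $b=1$: the eigenvalues of $aI+bJ$ are $a=n-2$ with multiplicity $n-1$ and $a+nb=(n-2)+n=2(n-1)$ with multiplicity $1$. This is exactly the claimed spectrum, so the proof is complete. There is no real obstacle here — the only thing to be careful about is correctly identifying $A=J-I$ and not $A=J$, and then reading off the two eigenvalues of $(n-2)I+J$ from Theorem \ref{Useit} in the right order; the arithmetic $a+nb=2n-2=2(n-1)$ should be displayed so the reader sees it matches the statement.
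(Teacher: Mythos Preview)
Your proof is correct and follows essentially the same approach as the paper: invoke Corollary~\ref{Th3} to identify $\Theta(\mathbb{Z}_n)$ with $K_n$, write $Q=(n-2)I+J$, and apply Theorem~\ref{Useit}. The paper's argument is identical but more terse; your added explanation of why $A=J-I$ and the explicit computation $a+nb=2(n-1)$ are fine expository choices.
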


\begin{proof}
	If $n=p$, then using Corollary \ref{Th3}, $\Theta(\mathbb{Z}_n)$ is complete.
	Thus, $Q=(n-2)I +J$. Using Theorem \ref{Useit}, the eigenvalues of $Q$ are  $2(n-1)$ with multiplicity $1$ and $n-2$ with multiplicity $n-1$.
\end{proof}

\begin{theorem}\label{Spec2}

If $n=p^m$, where $m\ge 2$, then the eigenvalues of $Q(\Theta(\mathbb{Z}_n))$  are  $p$ with multiplicity $p^m-p-1$, $p^m-2$ with multiplicity $p-1$, and the other two are given by the solutions of the equation $x^2-x(p^m+2p-2)+2p(p-1)=0$.

\end{theorem}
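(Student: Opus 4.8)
The plan is to exploit the very simple structure of $\Theta(\mathbb{Z}_{p^m})$. Every element of $\mathbb{Z}_{p^m}$ has order a power of $p$, and for $x,y\in\mathbb{Z}_{p^m}$ the number $\gcd(o(x),o(y))$ equals $1$ or a prime if and only if at least one of $x,y$ has order $1$ or $p$. First I would put $S=S(\mathbb{Z}_{p^m})=\{0,p^{m-1},2p^{m-1},\dots,(p-1)p^{m-1}\}$, the set of elements of order $1$ or $p$, so that $|S|=p$, and set $T=\mathbb{Z}_{p^m}\setminus S$, so that $|T|=p^m-p$. By the observation above, $S$ induces a complete graph $K_p$ and every vertex of $S$ is adjacent to every other vertex of $\Theta(\mathbb{Z}_{p^m})$, whereas $T$ is an independent set in which each vertex is adjacent to exactly the $p$ vertices of $S$ and to nothing else.

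Next I would index the rows and columns of $Q=Q(\Theta(\mathbb{Z}_{p^m}))$ by listing the elements of $S$ first and those of $T$ afterwards, which puts $Q$ in the block form
\[
Q=\left(\begin{array}{cc}(p^m-2)I+J & J \\ J^T & pI\end{array}\right),
\]
where the diagonal blocks have orders $p$ and $p^m-p$ respectively. Exactly as in the proof of Theorem \ref{Spec1}, two eigenvalues of large multiplicity are read off from repeated rows: $Q-pI$ has $p^m-p$ identical rows coming from the $T$-block, so $p$ is an eigenvalue of $Q$ of multiplicity at least $p^m-p-1$; and $Q-(p^m-2)I$ has $p$ identical rows coming from the $S$-block, so $p^m-2$ is an eigenvalue of multiplicity at least $p-1$.

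To determine the remaining two eigenvalues I would use the equitable partition $\pi=\{S,T\}$: every vertex of $S$ has $p-1$ neighbours in $S$ and $p^m-p$ in $T$, while every vertex of $T$ has $p$ neighbours in $S$ and $0$ in $T$. Applying formula (\ref{EqP}) gives the quotient matrix
\[
Q_{\pi}=\left(\begin{array}{cc}p^m+p-2 & p^m-p \\ p & p\end{array}\right),
\]
whose characteristic polynomial is $x^2-(p^m+2p-2)x+2p(p-1)$. By Lemma $5.1$ of \cite{banerjee2019signless}, both roots of this polynomial are eigenvalues of $Q$; substituting $x=p$ and $x=p^m-2$ and using $m\ge 2$ (so that $p^{m-1}>1$ and $p^m>p+1$) shows that neither value is a root, hence these two eigenvalues are genuinely distinct from the ones found above. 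Since $(p^m-p-1)+(p-1)+2=p^m$, the three contributions account for all eigenvalues, which forces the stated multiplicities.

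The only step requiring real care is the first one, namely verifying that adjacency in $\Theta(\mathbb{Z}_{p^m})$ collapses to ``some endpoint has order at most $p$'' and hence that $|S|=p$; once the graph is identified in this way the spectral computation merely repeats the argument of Theorem \ref{Spec1}. A secondary, purely arithmetic obstacle is checking that the roots of the quadratic avoid $p$ and $p^m-2$, which is precisely where the hypothesis $m\ge 2$ is needed; indeed, when $m=1$ the graph is complete and the statement degenerates to the preceding proposition.
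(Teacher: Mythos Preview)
Your proposal is correct and follows essentially the same approach as the paper: the same ordering of vertices into $S$ and $T$, the same block form for $Q$, the same repeated-row argument for the eigenvalues $p$ and $p^m-2$, the same equitable partition $\pi=\{S,T\}$ with the same quotient matrix $Q_\pi$, and the same check that $\Lambda(p)\neq 0$ and $\Lambda(p^m-2)\neq 0$ using $m\ge 2$. Your added explanation that $\gcd(o(x),o(y))\in\{1,p\}$ iff $\min(o(x),o(y))\le p$ makes the structure of the graph more transparent than the paper does, but otherwise the arguments coincide.
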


\begin{proof}
 
The rows and columns of the matrix $Q$ have been indexed in the following way,
\\
We start with the zero element $0$ of $\mathbb Z_n$.
We then list the following elements of $\mathbb Z_n$,
 $$\{p^{m-1},2p^{m-1},3p^{m-1},\ldots ,(p-2)p^{m-1},(p-1)p^{m-1}\}.$$
We then list the remaining non-generators of $\mathbb Z_n$, and finally we list the generators of $\mathbb Z_n$.
Since each element of the set  $\{p^{m-1},2p^{m-1},3p^{m-1},\ldots,(p-2)p^{m-1},(p-1)p^{m-1}\}$ has order $p$ we find that they are adjacent to all other vertices of $\Theta(\mathbb{Z}_n)$.
Using the above indexing,  $Q$ takes the following form,
\begin{flalign*}
Q&=\left(
\begin{array}{ccccccccc}
((n-2)I+J)_{p\times p}&& J_{p\times (n-p)}
\\
\\
J^T_{(n-p)\times p}&& pI_{(n-p)\times (n-p)}
\end{array}
\right).
\end{flalign*}

If we consider the matrix $Q-(n-2)I$, we find that it has $p$ identical rows, and hence $n-2$ is an eigenvalue of $Q$ with multiplicity at least $p-1$.
Similarly, if we consider the  matrix $Q-pI$, we find that it has $p^m-p$ identical rows, and hence $ p$ is an eigenvalue of $Q$ with multiplicity at least $p^m-p-1$.

We again partition the vertex set $V$ of $\Theta(\mathbb{Z}_n)$ as $V=V_1\cup V_2$ where 
$V_1=\{0,p^{m-1},2p^{m-1},\ldots ,(p-2)p^{m-1},(p-1)p^{m-1}\}$ and $V_2=V\setminus V_1$. We observe that each vertex $v$ in $V_1$ has $p-1$ neighbors in $V_1$ and $p^m-p$ neighbors in $V_2$. Similarly, each vertex $v$ in $V_2$ has $p$ neighbors in $V_1$ and $0$ neighbors in $V_2$. Thus, the partition is an equitable partition, and we call the partition $\pi$.

According to Equation (\ref{EqP}), the equitable quotient matrix corresponding to the partition $\pi$ becomes,
\[ 
Q_{\pi}=\left(
\begin{array}{ccccccccc}
p^m+p-2 && p^m-p\\
p && p
\end{array}\right)
\]

The characteristic polynomial of $Q_{\pi}$ is $$\Lambda(x)=x^2-x(p^m+2p-2)+2p(p-1).$$	
Note that for a given  $p$ and $m\ge 2$,
\begin{flalign*}
\Lambda(p)=p(p-p^m)\neq 0\\
\text{ and } \Lambda(p^m-2)=-2p(p^m-p-1)\neq 0
\end{flalign*}
We find that the solutions of $\Lambda(x)=0$ are $\frac{1}{2}\biggl(p^m+2p-2\pm\sqrt{(p^m+2p-2)^2-8p(p-1)}\biggr)$.
Since the eigenvalues of $Q_{\pi}$ are different from $p$ and $p^m-2$, using   \cite[Lemma $5.1$]{banerjee2020signless} we find that the eigenvalues of $Q$ are   $p$ with multiplicity $p^m-p-1$, $p^m-2$ with multiplicity $p-1$, and the other two are solutions of the equation $x^2-x(p^m+2p-2)+2p(p-1)=0$.

\end{proof}

\subsection{Signless Laplacian Spectrum of $\Theta(\mbox D_{n})$}

In this section, we shall find the signless Laplacian spectrum of $\Theta(\mbox D_n)$.
We know that $$\mbox D_n=\{\langle r,s\rangle :r^n=s^2=1, rs=sr^{-1}\}.$$
We first index the elements $r^i$,  and then index the elements $sr^i$ where $0\le i\le n-1$.
We also note that $o(sr^i)=2$ for all $0\le i\le n-1$, and hence $\gcd(o(sr^i),o(sr^j))=2$ for all $0\le i,j\le n-1$.
Also, $r^i$ is adjacent to $sr^j$ for all $0\le i,j\le n-1$.
The signless Laplacian matrix of $\Theta(\mbox D_{n})$ is given by:

\begin{equation}\label{Di}
Q(\Theta(\mbox D_{n}))=
\left(
\begin{array}{cccccccccc}
(Q(\Theta(\mathbb{Z}_{n}))+nI)_{n\times n} & & & J_{n\times n}
\\
\\
J^T_{n\times n} & & & ((2n-2)I+J)_{n\times n}
\end{array}
\right).
\end{equation}

Using Equation (\ref{Di}) we find that the signless Laplacian matrix of $\Theta(D_{n})$ depends on the signless Laplacian matrix of $\Theta(\mathbb{Z}_{n})$.
In the previous section we had determined the eigenvalues of $Q(\Theta(\mathbb{Z}_{n}))$ for $n\in \{p^m,pq\}$.
We will use those in this section to find the eigenvalues of $Q(\Theta(D_{n}))$ for $n\in \{p^m,pq\}$.

\begin{theorem}
	\label{Dpq}
If $n=pq$, then the eigenvalues of $Q(\Theta(\mbox D_{n}))$  are $2(n-1)$ with multiplicity $2n-\varphi(n)-1$, $2n-\varphi(n)$ with multiplicity $\varphi(n)-1$, and $3n-\varphi(n)-1\pm \sqrt{n^2+2\varphi(n)n-\varphi(n)^2-2n+1}$ each with multiplicity $1$.
\end{theorem}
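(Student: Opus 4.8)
The plan is to read off the eigenvalues from the block form of $Q:=Q(\Theta(\mathrm D_n))$ in Equation~(\ref{Di}), using the already-computed spectrum of $Q(\Theta(\mathbb Z_n))$ for $n=pq$. First I would permute the rows/columns of the rotation block so that, exactly as in the proof of Theorem~\ref{Spec1}, the $r^i$'s are listed in the order: $0$, then the remaining non-generators of $\mathbb Z_n$, then the generators; since every non-generator of $\mathbb Z_{pq}$ has order $1$, $p$ or $q$, the non-generators form a clique each adjacent to every generator while distinct generators are non-adjacent, so
\[
Q(\Theta(\mathbb Z_n))+nI=\begin{pmatrix}(2n-2)I+J & J\\ J^{T} & (2n-\varphi(n))I\end{pmatrix},
\]
with blocks of sizes $n-\varphi(n)$ and $\varphi(n)$. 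Thus, ordering the $2n$ vertices of $\mathrm D_n$ as $V_1$ (non-generator rotations), $V_2$ (generator rotations), $V_3$ (reflections), the matrix $Q$ is a $3\times3$ block matrix whose off-diagonal blocks are all-ones and whose diagonal blocks are $(2n-2)I+J$ on $V_1$, $(2n-\varphi(n))I$ on $V_2$, and $(2n-2)I+J$ on $V_3$; the partition $\pi=\{V_1,V_2,V_3\}$ is equitable.

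Next I would peel off the ``interior'' eigenvalues. The subspace $W^{\perp}$ of vectors whose entries sum to zero on each of $V_1,V_2,V_3$ is $Q$-invariant (equitability plus symmetry of $Q$), has dimension $2n-3$, and on it every $J$-contribution vanishes, so $Q$ acts as the scalar $2n-2$ on the $V_1$- and $V_3$-coordinates and as $2n-\varphi(n)$ on the $V_2$-coordinates. This gives $2n-2$ with multiplicity $(n-\varphi(n)-1)+(n-1)=2n-\varphi(n)-2$ and $2n-\varphi(n)$ with multiplicity $\varphi(n)-1$. Equivalently, one may lift eigenvectors directly: if $v$ is a zero-sum eigenvector of $Q(\Theta(\mathbb Z_n))$ for eigenvalue $\mu$ then $(v,0)$ is an eigenvector of $Q$ for $\mu+n$, and a zero-sum vector supported on the reflections is an eigenvector for $2n-2$; by the ``repeated rows'' description in the proof of Theorem~\ref{Spec1} the relevant zero-sum eigenvalues are precisely $\mu\in\{n-\varphi(n),\,n-2\}$ (with the stated multiplicities), while the two eigenvectors attached to the roots of $x^{2}-x(pq+2p+2q-4)+2(p+q-1)(p+q-2)=0$ are block-constant and have nonzero sum, so they do not lift.

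The remaining three eigenvalues are those of the equitable quotient matrix $Q_\pi$ formed via Equation~(\ref{EqP}); by Lemma~$5.1$ of~\cite{banerjee2019signless} they lie in the spectrum of $Q$, and since $\dim W^{\perp}=2n-3$ they are exactly the missing three. Counting neighbours gives
\[
Q_\pi=\begin{pmatrix}3n-\varphi(n)-2 & \varphi(n) & n\\ n-\varphi(n) & 2n-\varphi(n) & n\\ n-\varphi(n) & \varphi(n) & 3n-2\end{pmatrix}.
\]
Subtracting the third row from the first and from the second turns the top two rows into $(2n-2)(1,0,-1)$ and $2\bigl(0,\,n-\varphi(n),\,-(n-1)\bigr)$, from which the characteristic polynomial factors as $\bigl(x-(2n-2)\bigr)$ times a quadratic having no root at $2n-2$ (here $\varphi(n)\ge 2$ is used), so $2n-2$ is a simple eigenvalue of $Q_\pi$ and the other two satisfy $x^{2}-2(3n-\varphi(n)-1)x+2(2n-\varphi(n))(2n-\varphi(n)-1)=0$, using $\det Q_\pi=4(n-1)(2n-\varphi(n))(2n-\varphi(n)-1)$. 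Solving this quadratic yields $3n-\varphi(n)-1\pm\sqrt{n^{2}+2\varphi(n)n-\varphi(n)^{2}-2n+1}$. Assembling: $2(n-1)$ has multiplicity $(2n-\varphi(n)-2)+1=2n-\varphi(n)-1$, $2n-\varphi(n)$ has multiplicity $\varphi(n)-1$, and the two surd roots occur once each, which is the assertion (substitute $\varphi(pq)=(p-1)(q-1)$ for the $p,q$ form). The only delicate points are bookkeeping — confirming that exactly the eigenvalues $n-\varphi(n)$ and $n-2$ of $Q(\Theta(\mathbb Z_n))$ carry zero-sum eigenvectors so that nothing is double-counted, and keeping the $3\times3$ determinant/discriminant algebra organized so it collapses to the stated surd — and I expect this last simplification to be the main (though routine) computational obstacle.
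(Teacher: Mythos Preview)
Your argument is correct and follows essentially the same route as the paper: both write $Q(\Theta(\mathrm D_n))$ in the $3\times3$ block form coming from the partition $\{V_1,V_2,V_3\}$, identify the bulk eigenvalues $2n-2$ and $2n-\varphi(n)$, and extract the remaining eigenvalues from the same equitable quotient matrix $Q_\pi$. The only cosmetic differences are that the paper reads off the bulk multiplicities by the ``identical rows'' trick (getting $2n-\varphi(n)-1$ for $2n-2$ directly, since the $V_1$- and $V_3$-rows of $Q-(2n-2)I$ are all equal), whereas you obtain $2n-\varphi(n)-2$ from $W^{\perp}$ and then add the extra copy coming from $Q_\pi$; and the paper expands the cubic $\Lambda(x)$ and factors it, whereas you spot the left eigenvector $(1,0,-1)$ via row reduction to split off the factor $x-(2n-2)$.
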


\begin{proof}
	If $n=pq$, using Equation (\ref{Equation 1}) of Theorem \ref{Spec1} and Equation (\ref{Di}), we find that the signless Laplacian matrix of $\Theta(\mbox D_{n})$ is of the following form:
	\begin{equation*}
	\begin{split}
	Q(\Theta(\mbox D_{n}))&=
	\left(\begin{array}{cccccccc}
	((2n-2)I+J)_{n-\varphi(n)\times (n-\varphi(n))} && J_{(n-\varphi(n)\times \varphi(n))} && J_{(n-\varphi(n)\times n)}\\
	\\
	J_{\varphi(n)\times (n-\varphi(n))} && (2n-\varphi(n)) I_{\varphi(n)\times \varphi(n)} && J_{\varphi(n)\times n}\\
	\\
	J_{n\times (n-\varphi(n))} && J_{n\times \varphi(n)} && ((2n-2)I+J)_{n\times n}
	\end{array}
	\right).
	\end{split}
	\end{equation*}
	
	If we consider the matrix $Q-(2n-2)I$, we find that it  has $2n-\varphi(n)$ identical rows. Thus, $2n-2$ is an eigenvalue of $Q$ with multiplicity at least $2n-\varphi(n)-1$.
	We also note that  $Q-(2n-\varphi(n))I$  has $\varphi(n)$ identical rows, which makes us conclude that $2n-\varphi(n)$ is an eigenvalue of $Q$ with multiplicity at least $\varphi(n)-1$.
	
	We partition the vertex set $V$ of $\Theta(\mbox D_n)$ in the following way:
	$V_1=\{1\}\cup \{r^i: \gcd(i,n)\neq 1\}$, $V_2=\{r^i: \gcd(i,n)= 1\}$ and $V_3=\{sr^i: 0\le i\le n-1\}$.
	Each vertex $v\in V_1$ has $n-\varphi(n)-1$ neighbors in $V_1$, $\varphi(n)$ neighbors in $V_2$ and $n$ neighbors in $V_3$.
	Each vertex $v\in V_2$ has $n-\varphi(n)$ neighbors in $V_1$, $0$ neighbors in $V_2$ and $n$ neighbors in $V_3$, and each vertex $v\in V_3$ has $n-\varphi(n)$ neighbors in $V_1$, $\varphi(n)$ neighbors in $V_2$ and $n-1$ neighbors in $V_3$.
	Hence, the partition is an equitable partition, and we call it $\pi$.
	Using Equation  (\ref{EqP}), the equitable quotient matrix corresponding to $\pi$ is given by:
	
	\[
	Q_{\pi}=
	\left(\begin{array}{ccccccc}
	3n-2-\varphi(n) & & \varphi(n) & & n\\
	\\
	n-\varphi(n) & & 2n-\varphi(n) & & n\\
	\\
	n-\varphi(n) & & \varphi(n) & & 3n-2
	\end{array}\right).\]
	The characteristic polynomial of $Q_{\pi}$ is 
	\begin{flalign*}
	\Lambda(x)&=x^3 + (2\varphi(n) - 8n + 4)x^2 + (2\varphi(n)^2 - 12\varphi(n)n + 20n^2 + 6\varphi(n) - 20n + 4)x
	\\& - 4\varphi(n)^2n + 16\varphi(n)n^2 - 16n^3 + 4\varphi(n)^2 - 20\varphi(n)n + 24n^2 + 4\varphi(n) - 8n.
	\end{flalign*}
	On solving, we find that solutions of $\Lambda(x)=0$ are 
	$2(n-1)$ with multiplicity $1$ and $3n-\varphi(n)-1\pm \sqrt{n^2+2\varphi(n)n-\varphi(n)^2-2n+1}$ each with multiplicity $1$.
	We further note that $\Lambda(2n-\varphi(n))\neq 0$, as otherwise it would imply 
	\begin{flalign*}
	2n-\varphi(n)&= 3n-\varphi(n)-1\pm\sqrt{n^2+2\varphi(n)n-\varphi(n)^2-2n+1}
	\\
	\text{which implies } (n-1)^2&=n^2+2n\varphi(n)-\varphi(n)^2-2n+1
	\\
	\text{which implies } 2n\varphi(n)-\varphi(n)^2&=0
	\\
	\text{which implies } (2n-\varphi(n))\varphi(n)&=0
	\text{ which is false for } n=pq. 
	\end{flalign*}
	Again, 	we further note that $2n-2\neq 3n-\varphi(n)-1\pm\sqrt{n^2+2\varphi(n)n-\varphi(n)^2-2n+1}$, as otherwise it would imply 
	\begin{flalign*}
		2n-2=3n-\varphi(n)-1\pm\sqrt{n^2+2\varphi(n)n-\varphi(n)^2-2n+1}\\
		\text{which implies } \biggl(n+1-\varphi(n)\biggr)^2&=n^2+2\varphi(n)n-\varphi(n)^2-2n+1\\
		\text{which implies } 2\varphi(n)^2+4n-2\varphi(n)-4n\varphi(n)&=0\\
		\text{which implies } \biggl(\varphi(n)-2n\biggr)\biggl(\varphi(n)-1\biggr)&=0\\
		\text{which implies either } \varphi(n)=2n \text{ or } \varphi(n)=1 
		\text{ which are both false.}
	\end{flalign*}

	We thus conclude that the eigenvalues $3n-\varphi(n)-1\pm \sqrt{n^2+2\varphi(n)n-\varphi(n)^2-2n+1}$ of $Q_{\pi}$ are distinct from both $2n-\varphi(n)$ and $2(n-1)$ for $n=pq$.
	Using  Lemma $5.1$ of \cite{banerjee2020signless}, we find that $3n-\varphi(n)-1\pm \sqrt{n^2+2\varphi(n)n-\varphi(n)^2-2n+1}$ are eigenvalues of $Q$ each with multiplicity $1$.
	Thus, the eigenvalues of $Q$  are $2(n-1)$ with multiplicity $2n-\varphi(n)-1$, $2n-\varphi(n)$ with multiplicity $\varphi(n)-1$ and $3n-\varphi(n)-1\pm \sqrt{n^2+2\varphi(n)n-\varphi(n)^2-2n+1}$ each with multiplicity $1$.

\end{proof}

\begin{proposition}
	If $n=p$, then the eigenvalues of $Q(\Theta(\mbox{D}_{n}))$ are  $2(n-1)$ with multiplicity $1$ and $n-2$ with multiplicity $n-1$.
\end{proposition}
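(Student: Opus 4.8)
The plan is to observe that when $n=p$ is prime the graph $\Theta(\mbox{D}_p)$ collapses to a complete graph, after which the spectrum is immediate from Theorem \ref{Useit}. Concretely, in $\mbox{D}_p$ the identity has order $1$, every nontrivial rotation $r^i$ has order $p$, and every reflection $sr^i$ has order $2$; hence for any two elements $x,y\in\mbox{D}_p$ the quantity $\gcd(o(x),o(y))$ lies in $\{1,2,p\}$ and is therefore always $1$ or a prime. Thus every pair of distinct vertices of $\Theta(\mbox{D}_p)$ is adjacent, so $\Theta(\mbox{D}_p)$ is the complete graph on $|\mbox{D}_p|=2n$ vertices. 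Alternatively one may simply quote Corollary \ref{Th31} (since $p$ is prime), or read the same fact off Equation (\ref{Di}) together with the fact that $\Theta(\mathbb{Z}_p)$ is complete by Corollary \ref{Th3}.

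Once completeness is established, the computation is routine. Each of the $2n$ vertices has degree $2n-1$ and the adjacency matrix of a complete graph on $2n$ vertices is $J-I$, so $Q(\Theta(\mbox{D}_p))=(2n-1)I+(J-I)=(2n-2)I+J$ with $I$ and $J$ of order $2n$. Applying Theorem \ref{Useit} with $a=2n-2$ and $b=1$ then yields the eigenvalue $2n-2$ with multiplicity $2n-1$ and the eigenvalue $(2n-2)+2n=2(2n-1)$ with multiplicity $1$, which is the desired spectral description of $Q(\Theta(\mbox{D}_p))$.

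There is essentially no obstacle here: the statement is a one-line corollary of Corollary \ref{Th31} and Theorem \ref{Useit}, and, unlike Theorem \ref{Dpq}, no equitable-partition argument is needed because $\Theta(\mbox{D}_p)$ has no pair of non-adjacent vertices. The only points worth a sentence of care are that $\Theta(\mbox{D}_n)$ is a graph on $2n$ (not $n$) vertices, so the eigenvalues and multiplicities must be written in terms of $2n$, and that the argument also covers $p=2$: every non-identity element of $\mbox{D}_2$ has order $2$, so $\Theta(\mbox{D}_2)$ is again complete and the same formula applies.
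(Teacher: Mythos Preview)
Your approach is exactly the paper's: invoke Corollary~\ref{Th31} to see that $\Theta(\mbox{D}_p)$ is complete, write $Q$ as $aI+J$, and read off the spectrum via Theorem~\ref{Useit}. No equitable partition is needed, and you are right about that.

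Where you diverge from the paper is in the arithmetic, and there you are correct while the stated proposition (and the paper's own proof) are not. The paper writes $Q=(n-2)I+J$ and reports eigenvalues $2(n-1)$ with multiplicity $1$ and $n-2$ with multiplicity $n-1$; this is simply the $\mathbb{Z}_n$ proposition copied verbatim and forgets that $|\mbox{D}_n|=2n$. Indeed the listed multiplicities sum to $n$, not $2n$, so they cannot describe a $2n\times 2n$ matrix. Your computation $Q=(2n-2)I+J$ with $I,J$ of order $2n$, giving eigenvalues $2n-2$ with multiplicity $2n-1$ and $2(2n-1)$ with multiplicity $1$, is the correct statement and proof. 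So your ``point worth a sentence of care'' is in fact a genuine correction.
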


\begin{proof}
	If $n=p$, then using Corollary \ref{Th31}, $\Theta(\mbox{D}_n)$ is complete.
	Thus, $Q=(n-2)I +J$. Using Theorem \ref{Useit}, the eigenvalues of $Q$ are  $2(n-1)$ with multiplicity $1$ and $n-2$ with multiplicity $n-1$.
\end{proof}

\begin{theorem}
	If $n=p^m$ where $m\ge 2$, then the eigenvalues of $Q(\Theta(\mbox{D}_{n}))$ are  $2n-2$  with multiplicity  $n+p-1$, $p+n$ with multiplicity  $n-p-1$, and $2n+p-1\pm \sqrt{2n^2-2n-p^2+1}$ each with multiplicity $1$.
\end{theorem}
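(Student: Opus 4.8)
The plan is to follow the same route as in the proof of Theorem \ref{Dpq}, but feeding in the block description of $Q(\Theta(\mathbb Z_n))$ for $n=p^m$ that was established in the proof of Theorem \ref{Spec2} rather than the one for $n=pq$. First I would index the rows and columns of $Q=Q(\Theta(\mbox{D}_n))$ by listing the $p$ elements of $S(\mathbb Z_n)$ lying among the rotations (that is, $0,p^{m-1},2p^{m-1},\dots,(p-1)p^{m-1}$), then the remaining $n-p$ rotations, and finally the $n$ reflections $sr^i$. Substituting the block form of $Q(\Theta(\mathbb Z_n))$ into Equation (\ref{Di}) — and noting that adding $nI$ turns the block $pI$ into $(n+p)I$ — gives
\begin{equation*}
Q=\left(\begin{array}{ccc}
((2n-2)I+J)_{p\times p} & J_{p\times(n-p)} & J_{p\times n}\\
J_{(n-p)\times p} & (n+p)I_{(n-p)\times(n-p)} & J_{(n-p)\times n}\\
J_{n\times p} & J_{n\times(n-p)} & ((2n-2)I+J)_{n\times n}
\end{array}\right).
\end{equation*}

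Next I would read off the two high-multiplicity eigenvalues from repeated rows, exactly as in Theorem \ref{Dpq}: in $Q-(2n-2)I$ the $p$ rows coming from the first block and the $n$ rows coming from the third block all coincide with the all-ones row, so $2n-2$ is an eigenvalue of multiplicity at least $n+p-1$; in $Q-(n+p)I$ the $n-p$ rows from the middle block coincide, so $n+p$ is an eigenvalue of multiplicity at least $n-p-1$. These already account for $2n-2$ of the $2n$ eigenvalues, so only two remain to be found. To get them I would apply the equitable-partition machinery of Lemma $5.1$ of \cite{banerjee2019signless} to the partition $V_1=\{1\}\cup\{r^i:o(r^i)=p\}$, $V_2=\{r^i:\gcd(i,n)\neq 1\}\setminus V_1$, $V_3=\{sr^i:0\le i\le n-1\}$. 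A short adjacency count — every vertex of $V_1$ and every vertex of $V_3$ is adjacent to all other vertices, while a vertex of $V_2$ has order $p^k$ with $k\ge 2$, hence is adjacent to all of $V_1\cup V_3$ but to no other vertex of $V_2$ — yields the quotient matrix
\begin{equation*}
Q_\pi=\left(\begin{array}{ccc}
2n+p-2 & n-p & n\\
p & n+p & n\\
p & n-p & 3n-2
\end{array}\right).
\end{equation*}
Subtracting $(2n-2)I$ makes the first and third rows of $Q_\pi$ identical, so $2n-2$ is one root of its characteristic polynomial; dividing out the factor $x-(2n-2)$ leaves a quadratic, and I expect its roots to be $2n+p-1\pm\sqrt{2n^2-2n-p^2+1}$ (the trace identity $6n+2p-4=(2n-2)+2(2n+p-1)$ is a quick consistency check).

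Finally I would check that these two roots are genuinely new, i.e.\ distinct from $2n-2$, from $n+p$, and from each other: each possible coincidence reduces, after squaring, to an identity such as $n(n-1)=p(p+1)$ or $n=p$ that is impossible for $n=p^m$ with $m\ge 2$, and for such $n$ the discriminant $2n^2-2n-p^2+1$ is strictly positive. Lemma $5.1$ of \cite{banerjee2019signless} then places both of them in the spectrum of $Q$ with multiplicity $1$, and together with the $n+p-1$ copies of $2n-2$ and the $n-p-1$ copies of $n+p$ this exhausts all $2n$ eigenvalues, forcing every multiplicity bound to be an equality and giving the claimed spectrum. I expect the only real work to be computational rather than conceptual: correctly assembling the block matrix from the proof of Theorem \ref{Spec2}, doing the adjacency bookkeeping (in particular checking the borderline prime $p=2$, where the order-$p$ rotations and all reflections have order $2$), and factoring the cubic characteristic polynomial of $Q_\pi$.
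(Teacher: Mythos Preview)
Your proposal is correct and follows essentially the same route as the paper: the same block form obtained from Equation~(\ref{Di}) and Theorem~\ref{Spec2}, the same repeated-row argument for the multiplicities of $2n-2$ and $n+p$, the same equitable partition into $V_1,V_2,V_3$, and the same $3\times3$ quotient matrix $Q_\pi$. One slip to fix: your description $V_2=\{r^i:\gcd(i,n)\neq 1\}\setminus V_1$ omits the $\varphi(n)$ generators of the rotation subgroup (for $n=p^m$ these have order $p^m$ and must lie in $V_2$); you want $V_2=\{r^i:0\le i\le n-1\}\setminus V_1$, which is what your count $|V_2|=n-p$ and your adjacency description already assume.
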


\begin{proof}
If $n=p^m$, using Equation (\ref{Equation 1}) of Theorem \ref{Spec2} and Equation (\ref{Di}) we find that $Q=Q(\Theta(\mbox D_{n}))$ is of the following form:
	\begin{equation*}
	\begin{split}
	Q&=
	\left(\begin{array}{ccccc}
	((2n-2)I+J)_{p\times p} && J_{p\times (n-p)} && J_{p\times n}\\
	\\
	J_{(n-p)\times p} && (p+n) I_{(n-p)\times (n-p)} && J_{(n-p)\times n}\\
	\\
	J_{n\times p} && J_{n\times (n-p)} && ((2n-2)I+J)_{n\times n}
	\end{array}
	\right).
	\end{split}
	\end{equation*}
	
	We note that $Q-(2n-2)I$ has $n+p$ identical rows, and hence  $2n-2$ is an eigenvalue of $Q$ with multiplicity at least $n+p-1$.
	Similarly,  $Q-(p+n)I$ has $n-p$ identical rows, and hence $p+n$ is an eigenvalue of $Q$ with multiplicity at least $n-p-1$.
	
	We now partition the vertex set $V$ of $\Theta(\mbox D_{p^m})$ as $V=V_1\cup V_2\cup V_3$ where 
	$$V_1=\{1,r^{p^{m-1}},r^{2(p^{m-1})},\ldots, r^{(p-1)(p^{m-1})}\},$$ $V_2=\{r^i: 0\le i\le n-1\}\setminus V_1$, and $V_3=\{sr^i: 0\le i\le n\}$.
	Each vertex $v\in V_1$ has $p-1$ neighbors in $V_1$, $n-p$ neighbors in $V_2$ and $n$ neighbors in $V_3$.
	Each vertex $v\in V_2$ has $p$ neighbors in $V_1$, $0$ neighbors in $V_2$ and $n$ neighbors in $V_3$ and each vertex $v\in V_3$ has $p$ neighbors in $V_1$, $n-p$ neighbors in $V_2$ and $n-1$ neighbors in $V_3$.
	Hence, the partition is an equitable partition, and we call it $\pi$.
	
	The equitable quotient matrix of $Q$ corresponding to $\pi$ is given by:
	\[Q_{\pi}=\left(\begin{array}{ccccccc}
	2(n-1)+p & & n-p & & n\\
	p & & n+p & & n\\ 
	p & &n-p && 3n-2
		\end{array}\right).
		\]
	The characteristic polynomial of $Q_{\pi}$ is 
	\begin{flalign*}
	\Lambda(x)&=x^3 + (-6n - 2p + 4)x^2 + (10n^2 + 8np + 2p^2 - 14n - 6p + 4)x 
	\\& -4n^3 - 8n^2p - 4np^2 + 8n^2 + 12np + 4p^2 - 4n - 4p.
	\end{flalign*}	
	We find that the solutions of $\Lambda(x)=0$ are $2(n-1)$ and $2n+p-1\pm \sqrt{2n^2-2n-p^2+1}$.
	We further note that, $\Lambda(n+p)\neq 0$, as otherwise it would imply, 
	\begin{flalign*}
	n+p&= 2n+p-1\pm \sqrt{2n^2-2n-p^2+1}
	\\
	\text{which implies } -n+1&=\pm \sqrt{2n^2-2n-p^2+1}.
	\\
	\text{which implies } (n-1)^2&=2n^2-2n-p^2+1
	\\
	\text{which implies } n^2&=p^2 	\text{ which is false}. 
	\end{flalign*}	
		
	We further note that $2(n-1)\neq 2n+p-1\pm \sqrt{2n^2-2n-p^2+1}$, as otherwise
	\begin{flalign*}
		2(n-1)&=2n+p-1\pm \sqrt{2n^2-2n-p^2+1}\\
			\text{which implies } -p-1&=\pm  \sqrt{2n^2-2n-p^2+1}\\
			\text{which implies } (p+1)^2&=2n^2-2n-p^2+1\\
			\text{which implies } 2p(p+1)&=2n(n-1)\\
			\text{which implies } p+1&= p^{m-1}(p^m-1)\\
			\text{which implies } \frac{p^m-1}{p+1}&=\frac{1}{p^{m-1}}<1 \text{ which is false}.
	\end{flalign*}
		
	Thus, the eigenvalues $2n+p-1\pm \sqrt{2n^2-2n-p^2+1}$ of $Q_\pi$ are different from both $n+p$ and $2(n-1)$ for $n=p^m$.
	Using   \cite[Lemma $5.1$]{banerjee2020signless}, we conclude that $2n+p-1\pm \sqrt{2n^2-2n-p^2+1}$ are eigenvalues of $Q$ each with multiplicity  $1$. Thus, the eigenvalues of $Q$ are  $2n-2$  with multiplicity  $n+p-1$, $p+n$ with multiplicity   $n-p-1$, and $2n+p-1\pm \sqrt{2n^2-2n-p^2+1}$ each with multiplicity $1$.

\end{proof}

\printbibliography

\end{document}